\documentclass[12pt]{amsart}
%
%
% THEOREM Environments (Examples)-----------------------------------------
%
 \newtheorem{Theorem}{Theorem}[section]
 \newtheorem{corollary}[Theorem]{Corollary}
 \newtheorem{lemma}[Theorem]{Lemma}
 \newtheorem{proposition}[Theorem]{Proposition}
 \theoremstyle{definition}
 \newtheorem{definition}[Theorem]{Definition}
 \theoremstyle{remark}
 \newtheorem{remark}[Theorem]{Remark}
 \newtheorem{example}{Example}
 \numberwithin{equation}{section}

\usepackage[cp1250]{inputenc}
\usepackage{graphicx}
\usepackage{url}
\usepackage[all]{xy}
\usepackage{multicol}
\usepackage{amsmath}
\usepackage{color}
\usepackage{amssymb}
\usepackage{geometry}
\usepackage{pinlabel}
\usepackage{hyperref}

%\newtheorem{Theorem}{Theorem}[section]
%\newtheorem{lemma}[Theorem]{Lemma}
%\newtheorem{proposition}[Theorem]{Proposition}
%\newtheorem{corollary}[Theorem]{Corollary}
%\newtheorem{remark}[Theorem]{Remark}
%\newtheorem{example}[Theorem]{Example}

%\newnumbered{example}{Example}
%\newnumbered{definition}{Definition}
%\newnumbered{remark}{Remark}
%\theoremstyle{definition}
%\newtheorem{definition}{Definition}[section]

\newcommand{\tr}{\triangleright}
\newcommand{\tl}{\triangleleft}

\newcommand{\qb}{\mathcal{BQ}}

%\keywords{fundamental quandle, tangle, periodic links, satellite knot.}

%\date{\today}

\begin{document}

\title{Knot quandle decompositions}

%----------Author 1
\author{A. Cattabriga}
\address{
Department of Mathematics \\University of Bologna\\Piazza di Porta San Donato 5\\ 40126 Bologna \\Italy}
\email{alessia.cattabriga@unibo.it}
   
\thanks{The first author was  supported by the ``National Group for Algebraic and Geometric Structures, and their Applications" (GNSAGA-INdAM) and University of Bologna, funds for selected research topics. The second author was supported by the Slovenian Research Agency grant N1-0083.}
 
\author{E. Horvat}
\address{Faculty of Education \\University of Ljubljana\\Kardeljeva plo\v s\v cad 16\\ 1000 Ljubljana\\
   Slovenia}\email{eva.horvat@pef.uni-lj.si}

%----------classification, keywords, date
\subjclass{Primary 57M25; Secondary 57M05}

\keywords{Fundamental quandle, tangle, periodic link, satellite knot}

\date{\today}
%----------additions
%\dedicatory{To my boss}
%%% ----------------------------------------------------------------------

\begin{abstract}
 We show that the fundamental quandle defines a functor from the oriented tangle category to a suitably defined quandle category. Given a tangle decomposition of a link $L$, the fundamental quandle of $L$ may be obtained from the fundamental quandles of tangles. We apply this result to derive a presentation of the fundamental quandle of periodic links, composite knots and satellite knots. 
\end{abstract}

\maketitle

\begin{section}{Introduction}
A quandle is an algebraic structure whose axioms express invariance under the three Reidemeister moves. The idea of studying knots using quandles arose in the 1980's by Joyce \cite{DJ} and Matveev \cite{MAT}. In this century, quandle-type invariants were extensively developed and successfully used in the theory of classical and virtual links. Most of these invariants are based on the fundamental quandle, which can be defined for any codimension 2 submanifold \cite{FR}.  \\

The fundamental quandle of a link generalizes the link group and, for prime knots,  is a classifying invariant. Nevertheless, it is quite hard to compute and  its topological meaning, as well as its connection to other link invariants, is not completely understood.  For example, its relation with the Alexander polynomial is well known, but no connection with the Jones polynomial has been described (see \cite{FR}).\\

In this paper we present a categorical approach to the fundamental quandle of knots and links. Namely, we look at links as morphisms in the tangle category, and use the fundamental quandle  to construct a functor from the oriented tangle category to a new category we introduce: the bordered quandle category. The objects of this category are free quandles and a morphism between two objects $F_1$ and $F_2$  is given by a quandle $Q$, together with two quandle homomorphisms $F_1\stackrel{i_{1}}{\rightarrow }Q\stackrel{i_2}{\leftarrow }F_2$. \\

Fundamental quandles of tangles were previously used to study, for example, tangle embeddings \cite{AM}, \cite{NI1} and composite knots \cite{CS}. Nevertheless, no categorical approach was introduced. On the contrary, in \cite{BL} a categorical approach is introduced to study quantum invariants of links. However, the category analyzed there is different from ours: the morphisms are $Q$-colored tangles; these are couples $(\Gamma,\rho)$, where $\Gamma $ is a tangle and $\rho$ is quandle morphism from the fundamental quandle of $\Gamma $ to a fixed quandle $Q$.

Our approach, besides giving a theoretical background to some of the previously mentioned results, reveals to be very flexible and effective in computing the fundamental quandle of knots and links. Indeed, using our functor, the fundamental quandle of any link may be decomposed into simpler quandles that correspond to fundamental quandles of tangles constituting the link. As we show in the paper, this decomposition is useful when dealing with  classes of links admitting special tangle decompositions as periodic links, composite knots or satellite knots. Moreover, it becomes a necessary tool when studying large links with many crossings, whose crossing information is too complicated to compute the quandle invariants in one piece. Finally, our approach may reveal a new way of investigating the relationship between the fundamental quandle of a link and other link invariants that admit a description involving the tangle category, such as the Jones polynomial.\\

The paper is organized as follows. In Section \ref{sec2}, we introduce the source and the target category of the fundamental quandle functor. In Subsection \ref{sub21}, we recall the oriented tangle category and introduce some basic notions that we will need. Subsection \ref{sub22} contains the definition of the bordered quandle category. In Section \ref{sec3}, we recall the definition of the fundamental quandle of a codimension 2 submanifold and prove that the fundamental quandle defines a functor from the oriented tangle category to the bordered quandle category. Section \ref{sec4} contains several applications of the fundamental quandle functor. We begin by computing the fundamental quandle of the closure of a tangle. In Subsection \ref{appl1}, we derive a presentation for the fundamental quandle of a periodic link. In Subsection \ref{appl2}, we focus on composite knots, describe a presentation of their fundamental quandle and discuss colorings of composite knots. In Subsection \ref{appl3} we study satellite knots. Using a suitable tangle decomposition of a satellite knot, we derive a presentation of its fundamental quandle in terms of fundamental quandles of the companion and the embellishment knot and give two explicit calculations. We conclude the paper with some ideas for further research, based on our results (see Subsection \ref{concl}). 
\end{section}

\begin{section}{The oriented tangle category and the bordered quandle category}
\label{sec2}

In this Section, we recall the definition of oriented tangle category and define the bordered quandle category.

\begin{subsection}{Oriented tangle category}
\label{sub21}

Let $D$ be the closed unitary disk in $\mathbb C$. The objects of the \textit{oriented tangle category} $\mathcal{T}^+$ are given as $P_{n}^{\phi }$, where 
\begin{itemize}
\item $P_{n}$ is a (possibly empty) finite ordered sequence of points $P_n=(p_1,\dots,p_n)$ on the real interval  $[-1,1]\subset D$, such that $-1<p_1 <p_2 <\ldots <p_n <1$,
\item $\phi \colon P_n \to \{-1,+1\}$ is a function. If $n=0$, then $\phi \colon \emptyset \to \{-1,+1\}$ is just the trivial inclusion. 
\end{itemize} We may think of the values of $\phi $ as a decoration (a plus or minus) for each of the points in $P_n$. We will  call $\phi (p_{i})$ the \textit{sign} of $p_{i}$, and the function $\phi$ could be thought of as an orientation of $\{p_1,\ldots,p_n\}$. If $\phi$ is the constant function equal to $+1$ (resp. $-1$), we set $P^{\phi}_n=P_n^+$ (resp. $P^{\phi}_n=P_n^-$).

Given two objects $P_{n}^{\phi }$ and $P_{m}^{\psi }$, a morphism from $P_{n}^{\phi }$ to $P_{m}^{\psi }$ is an oriented  $(n,m)$-tangle, that is an oriented   $1$-submanifold $\tau$ of $D \times I$, whose  oriented boundary $\partial\tau$ equals
\begin{eqnarray*}
\sum_{i=1}^{m}\psi(p_i)(p_i,1) - \sum_{i=1}^{n}\phi(p_i)(p_i,0)\;,
\end{eqnarray*}
considered up to ambient  isotopy of the cylinder that keeps $D\times \partial I$ fixed. Note that a morphism from $P_{n}^{\phi }$ to $P_{m}^{\psi }$ exists if only if  $\sum_{i=1}^{m}\psi(p_i)=\sum_{i=1}^{n}\phi(p_i)$. A morphism from $P_{n}^{\phi }$ to $P_{m}^{\psi }$ will be called a $(\phi,\psi)$-tangle or sometimes just an $(n,m)$-tangle when we don't need to refer to orientations.

Given a $(\phi,\psi)$-tangle $\tau$, the \textit{negative} (resp. \textit{positive}) oriented boundary of $\tau$ is defined as   $\partial^-\tau=\{\phi(p_1)(p_1,0),\dots,\phi(p_n)(p_{n},0)\}$ (resp. $\partial^+\tau=\{\psi(p_1)(p_1,1),\dots, \psi(p_m)(p_{m},1)\}$).

Note that with the above convention, a component of a tangle with nonempty boundary has  endpoints in different boundaries if and only if  the endpoints have the same sign. Moreover it is oriented upwards (with respect to the height function on the cylinder) if the sign of the endpoints is $+$, and downwards if the sign is $-$. While a component having both ends in the upper disk (resp. lower disk) goes from a $-$ point to a $+$ point (resp. from a $+$ point to a $-$ point).

Geometrically, the composition $\tau_1\tau_2$ of two tangles consists of the tangle obtained by stacking the tangle $\tau_2$ over $\tau_1$, identifying $\partial^+\tau_1$ with $\partial^-\tau_2$, and rescaling the resulting solid cylinder to $D\times I$. 

\begin{figure}[h!]
\labellist
\footnotesize \hair 2pt
\pinlabel $\lambda _{k}^{\phi}$ at 98 100
\pinlabel $-(p_1,1)$ at 35 365
\pinlabel $(p_{2},1)$ at 135 365
\pinlabel $-(p_{k},1)$ at 388 365
\pinlabel $(p_{k+1},1)$ at 670 365
\pinlabel $-(p_{2k-1},1)$ at 905 365
\pinlabel $(p_{2k},1)$ at 1030 365
\pinlabel $\chi_1$ at 540 -10
\pinlabel $\chi_2$ at 540 45 
\pinlabel $\chi_k$ at 540 190
\endlabellist
\begin{center}
\includegraphics[scale=0.3]{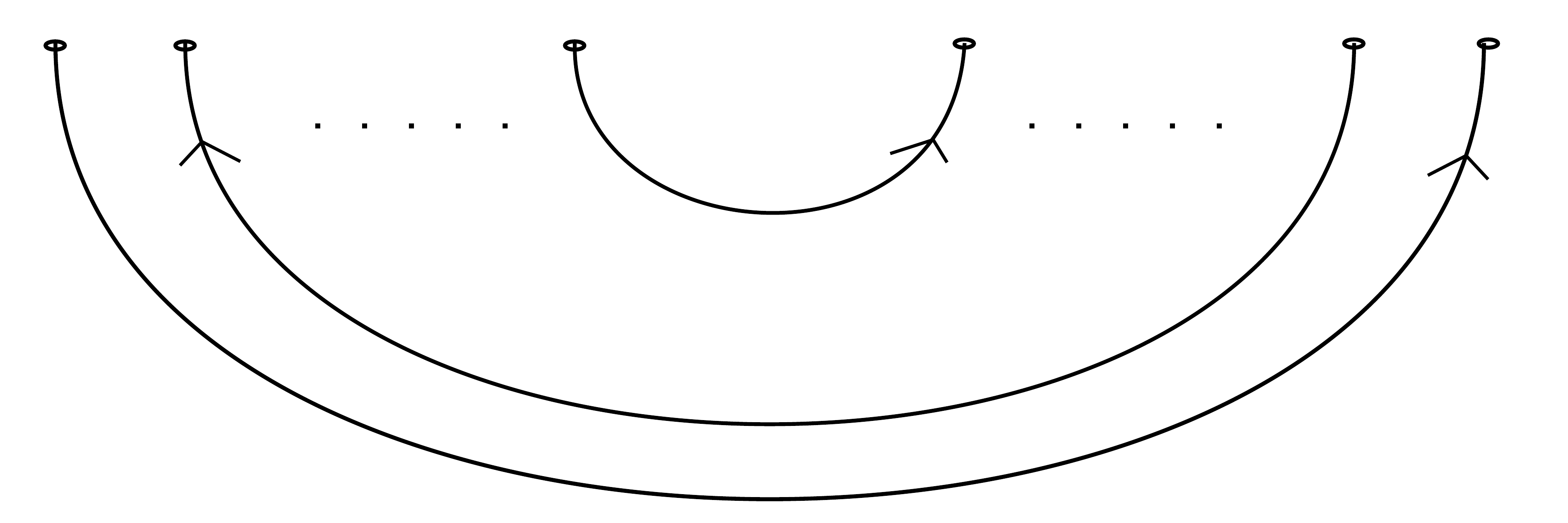}
\caption{The cup tangle $\lambda_k^{\phi}$ of type $\phi$.}
\label{fig_cupcap1}
\end{center}
\end{figure}

Given an object $P_n^{\phi}$, we denote by $P_n^{-\phi}$  the object obtained by changing the signs of all the points. If $\tau$ is a morphism  from $P_{n}^{\phi }$ to $P_{m}^{\psi }$, then $-\tau$ is a morphism from $P_{n}^{-\phi }$ to $P_{m}^{-\psi }$, where $-\tau$ is obtained from $\tau$ by changing the orientation of each component. Moreover, we denote by $\overline{\tau }$ the image of $\tau$ under the homeomorphism $D\times I \to D\times I$, defined by $(z,t)\mapsto (-z,1-t)$, that turns the solid cylinder upside down. Clearly, $\overline{\tau }$  is a morphism from $P_{m}^{\overline{\psi} }$ to $P_{n}^{\overline{\phi} }$, where $\overline{\psi}(p_i)=-\psi(p_{m-i+1})$ for $i=1,\ldots,m$ and  $\overline{\phi}(p_j)=-\phi(p_{n-j+1})$ for $j=1,\ldots,n$. Finally, we denote by $\lambda_k^{\phi}$, $T_k^{\phi}$  and $\mu_k^{\phi}$ the tangles of type $(0,2k)$, $(k,k)$ and $(0,2k)$ respectively, depicted in Figures \ref{fig_cupcap1} and \ref{fig_cupcap2}, and call them the cup tangle, trivial tangle and plat tangle of type ${\phi}$, respectively. Note that for each choice of $\phi$ there is a tangle $T_k^{\phi}$, while the tangle $\lambda_k^{\phi}$ (resp. $\mu_k^{\phi}$) exists if and only if $\phi(p_i)=-\phi(p_{2k-i+1})$ (resp. $\phi(p_{2i})=-\phi(p_{2i-1})$) for $i=1,\ldots,k$. \\

\begin{figure}[h!]
\labellist
\footnotesize \hair 2pt
\pinlabel $T_k^{\phi}$ at 40 300
\pinlabel $\mu _{k}^{\phi}$ at 720 160
\pinlabel $-(p_{1},1)$ at 650 330
\pinlabel $(p_{2},1)$ at 750 330
\pinlabel $-(p_{3},1)$ at 850 330
\pinlabel $(p_{4},1)$ at 950 330
\pinlabel $-(p_{2k-1},1)$ at 1150 330
\pinlabel $(p_{2k},1)$ at 1290 330
\pinlabel $-(p_1,0)$ at 70 30
\pinlabel $(p_{2},0)$ at 175 30
\pinlabel $-(p_{1},1)$ at 70 480
\pinlabel $(p_{k},0)$ at 445 30
\pinlabel $(p_2,1)$ at 175 480
\pinlabel $(p_k,1)$ at 445 480
\pinlabel $\chi_1$ at 65 220
\pinlabel $\chi_2$ at 155 220 
\pinlabel $\chi_k$ at 425 220
\pinlabel $\chi_1$ at 680 250
\pinlabel $\chi_2$ at 845 250  
\pinlabel $\chi_k$ at 1195 250 
\endlabellist
\begin{center}
\includegraphics[scale=0.27]{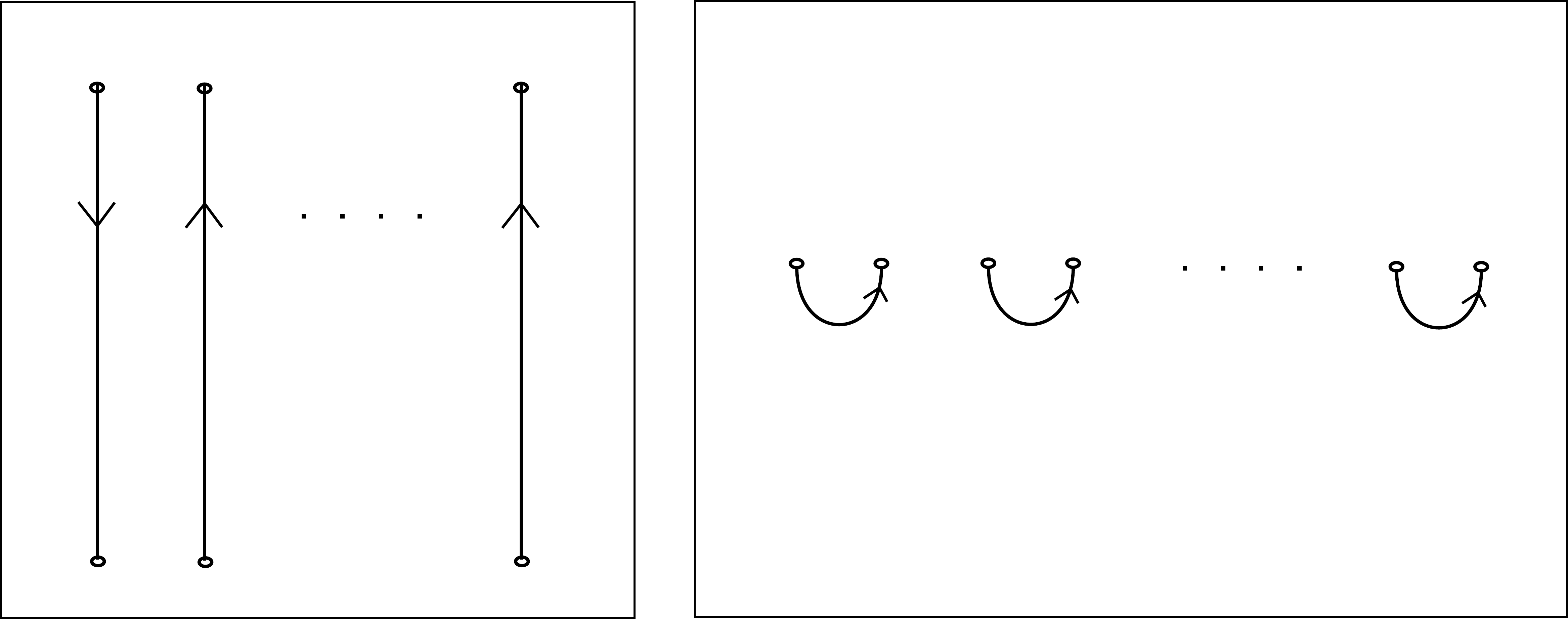}
\caption{The trivial tangle $T_k^{\phi}$ and the plat tangle $\mu_k^{\phi}$ of type $\phi$.}
\label{fig_cupcap2}
\end{center}
\end{figure}

The tangle category is a strict monoidal category, which means it has a tensor-like product. The tensor product of two objects $ P^{\phi}_n\otimes P^{\psi}_m$  (resp. two tangles $\tau_1\otimes\tau_2$)  is the sequence of $n+m$ points (resp. tangle), obtained by juxtaposing $P_m$ (resp. $\tau_2$) to the right of $ P_n$ (resp. $\tau_1$), keeping the signs. This operation is associative and has the empty sequence as the (left and right) identity object.  \\

By forgetting signs and orientation, we obtain a surjective faithful forgetful functor from $\mathcal T^+$ to $\mathcal T$, where $\mathcal T$ is the  \textit{tangle category}, that is  the category whose objects are (possibly empty) finite ordered sequences of points and whose morphisms are tangles. We denote by $P_n$ the image of $P_n^{\phi}$ in $\mathcal T$, while the image of an oriented tangle $\tau$ will still be denoted by $\tau$.

\begin{remark}
\label{rem_closing}
 In the tangle category, a link in $\mathbb R^3$ or $S^3$ can be seen as a  morphism from the empty set to itself, while braids on $k$ strands are isomorphisms of $P_k$. With this in mind, we can rephrase and extend the notions of classical  closure and plat closure of braids to tangles as follows. Given a $(k,k)$-tangle $\tau$, the \textit{classical closure} of $\tau$ is the link $\widehat{\tau}=\lambda_k (\tau\otimes T_k)\overline{\lambda_k}$;  while given a $(2n,2m)$-tangle $\tau $, the \textit{plat closure} of $\tau$ is the link $\widetilde{\tau}=\mu_n\tau\overline{\mu_m}$, see Figure \ref{fig_closingup}. Note that, since $\lambda_1=\mu_1$, for a  $(1,1)$-tangle $\tau$ we have $\widehat{\tau}=\widetilde{\tau\otimes T_1}$. 
 
 \begin{figure}[h!]
\labellist
\pinlabel $\tau $ at 210 400
\pinlabel $T_k$ at 840 330
\pinlabel $\tau $ at 1400 400
\pinlabel $\lambda _{k}$ at 120 100
\pinlabel $\overline{\lambda }_{k}$ at 120 740
\pinlabel $\mu _n$ at 1260 160 
\pinlabel $\overline{\mu }_m$ at 1260 660 
\pinlabel $\otimes $ at 500 400
\normalsize \hair 2pt
\endlabellist
\begin{center}
\includegraphics[scale=0.16]{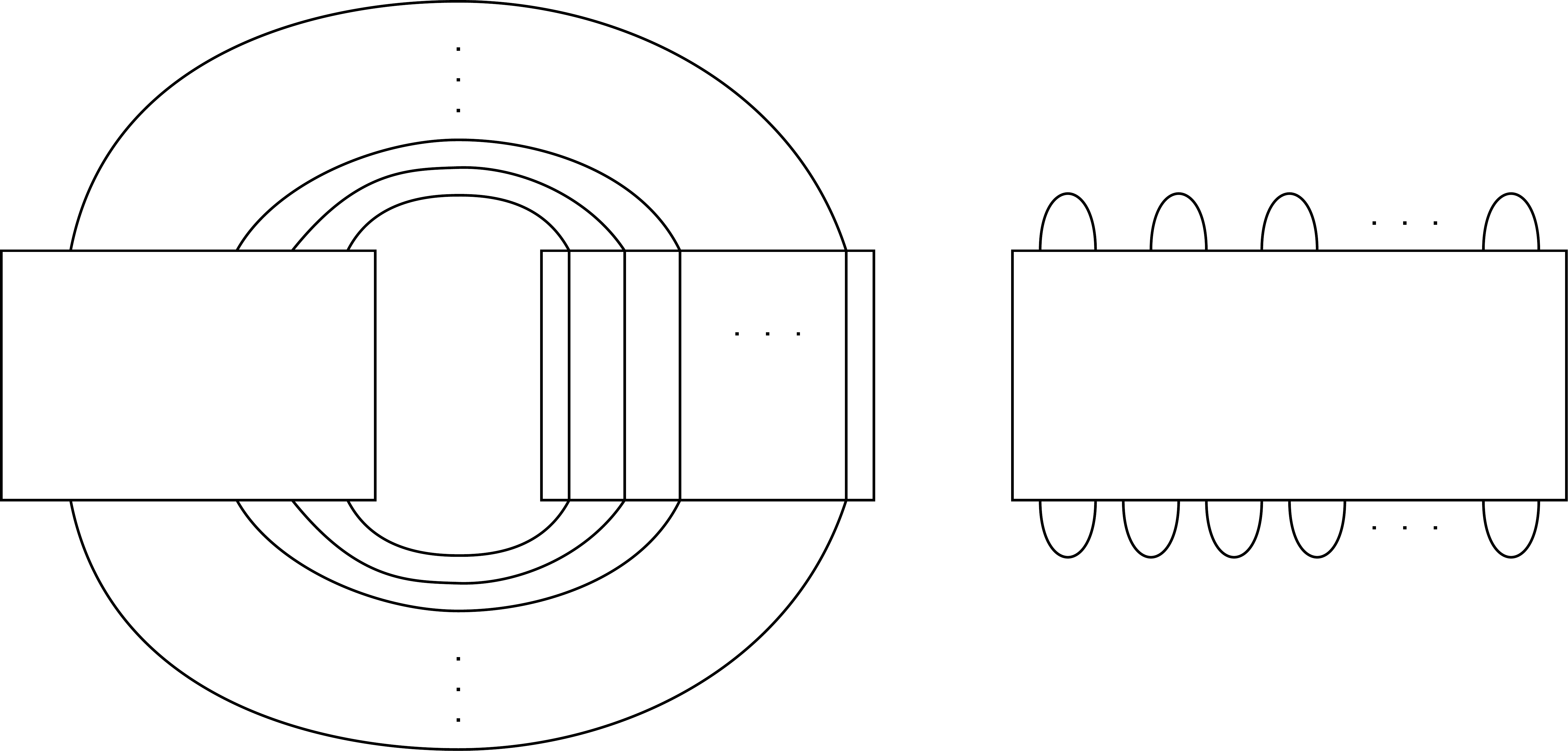}
\caption{Two types of the closing-up operation on tangles: classical closure $\widehat{\tau }$ (left) and plat closure $\widetilde{\tau }$ (right).}
\label{fig_closingup}
\end{center}
\end{figure}

 Analogously, oriented links could be seen as morphisms from the empty set to itself in the oriented tangle category. An oriented version of the classical closure could be defined for each $(\phi,\phi)$-tangle as $\widehat{\tau}=\lambda^{\eta}_k (\tau\otimes T_k^{\psi})\overline{\lambda^{\eta}_k}$, where  $\psi(p_{k-i+1})=-\phi(p_i)$ for $i=1,\ldots,k$, while $\eta$ coincides with $\phi$ on the first $k$ points and coincides with $\psi$ on the last $k$ points. Note that $\overline{\eta}=\eta$.  On the contrary, the plat closure could be defined only for $(\phi,\psi)$-tangles such that  $\phi(p_{2i-1})=-\phi(p_{2i})$ and $\psi(p_{2j-1})=-\psi(p_{2j})$,  for $i=1,\ldots, n$ and $j=1,\ldots,m$ as $\widetilde{\tau}=\mu_n^{\phi}\tau\overline{\mu^{\overline{\psi }}_m}$.

 \end{remark}

 Remark \ref{rem_closing} describes two different ways to ``close'' a tangle. On the contrary, given a link $L$ in $D\times I$, any 2-disk $D_t=D\times \{t\}$, intersecting the link transversely in $2n$ points, decomposes the link into two tangles $\tau_1$ and $\tau_2$, such that $\tau_1$ is a $(0,2n)$-tangle, $\tau_2$ is a $(2n,0)$-tangle and $\tau_1\tau_2=L$. In general, this decomposition depends on the disk $D_t$. Nevertheless,  this idea of splitting can be used, for example, to describe the connected sum of knots, as is done in Subsection \ref{appl2}.

\end{subsection}

\begin{subsection}{Bordered quandle category}\label{sub22}

\begin{definition} \label{def:quandle} A \textit{quandle} is a set $Q$ with a binary operation $\tr :Q\times Q\to Q$, which satisfies the following axioms:
\begin{enumerate}
\item $x\tr x=x$ for every $x\in Q$,
\item the map $\alpha _{y}\colon Q\to Q$, defined by $\alpha _{y}(x)=x\tr y$, is invertible for every $y\in Q$, and 
\item $(x\tr y)\tr z=(x\tr z)\tr (y\tr z)$ for every $x,y,z\in Q$. 
\end{enumerate} 

We set $x\tl y=\alpha_y^{-1}(x)$. This notation implies the identities $(x\tr y)\tl y=x=(x\tl y)\tr y$. 

Moreover, a \textit{quandle homomorphism} is a function $f: (Q_1,\tr_1)\to(Q_2,\tr_2)$ that satisfies $f(x\, \tr_1 y)=f(x)\tr_2 f(y)$ for every $x,y\in Q_{1}$. 

\end{definition}

\begin{example}Any group $(G,\circ )$ is a quandle for the operation $a\tr b:=b^{-1}\circ a\circ  b$. 
\end{example}

The \textit{quandle category} $\mathcal Q$ is the category whose objects are quandles and whose morphisms are quandle homomorphisms. In the remainder of this Section, we will define another category based on quandles: the bordered quandle category, whose objects will be free quandles. 

\begin{definition}
Given a set $X$, the \textit{free quandle} on $X$ is the quandle $(X\times F_G(X),\tr)$, where $F_G(X)$ is the free group over $X$ and the operation is defined as $(x_1,a)\tr(x_2,b)=(x_1,ab^{-1}x_2b)$. The free quandle on $X$ will be denoted by $F(X)$ and the elements of $X$ will be called the \textit{generators} of the free quandle. Clearly, two free quandles $F(X_1)$ and $F(X_2)$   are isomorphic if and only if $X_1$ and $X_2$ have the same cardinality.
\end{definition}

We define the \textit{bordered quandle category} $\qb $ as follows. The objects of $\qb $ are free quandles with finitely many (possibly zero) generators. A free quandle with zero generators is simply the empty set, viewed as an \textit{empty quandle}. For two objects $F_{1}$ and $F_{2}$, a morphism with source $F_{1}$ and target $F_{2}$ is given by a quandle $Q$, together with two quandle homomorphisms $F_1\stackrel{i_{1}}{\rightarrow }Q\stackrel{i_2}{\leftarrow }F_2$. Observe that if either of $F_{1}$, $F_{2}$ is the empty quandle, its inclusion into $Q$ is automatically a quandle homomorphism. 

Given three objects $F_{1},F,F_{2}\in Ob(\qb )$ with morphisms $F_1\stackrel{i_{1}}{\rightarrow }Q_{1}\stackrel{i}{\leftarrow }F$ and $F\stackrel{j}{\rightarrow }Q_{2}\stackrel{j_2}{\leftarrow }F_2$, define their composition by the triple $$F_{1}\stackrel{i_1}{\rightarrow }Q_{1}*_{(F,i,j)}Q_{2}\stackrel{j_2}{\leftarrow }F_2\;,$$ where $Q_{1}*_{(F,i,j)}Q_{2}$ denotes the amalgamated product of $Q_{1}$ and $Q_{2}$ with respect to the triple $(F,i,j)$, that is the pushout in the quandle category. It is easy to see this is indeed a category: amalgamation of quandles as a pushout is associative, while any object $F\in Ob(\qb )$ admits the identity morphism $F\stackrel{\textrm{id}}{\rightarrow }F\stackrel{\textrm{id}}{\leftarrow }F$ that satisfies the required property. 

\end{subsection}
\end{section}

\begin{section}{Fundamental quandle as a functor}
\label{sec3}

In this Section, we recall the definition of the fundamental quandle of a codimension 2 submanifold of a topological manifold. Moreover, we show that by using the fundamental quandle, it is possible to define a functor from the oriented tangle category $\mathcal{T}^{+}$ to the bordered quandle category $\qb$. 

\begin{definition}[\cite{FR}] Let $L\subset M$ be a codimension 2 submanifold of a connected manifold $M$. If $L$ is not closed, assume that $\partial L$ is contained in a connected component of $\partial M$.  Also assume that $L$ is transversely oriented in $M$, denote by $N_{L}$ the normal disk bundle and by $E_{L}=cl(M-N_{L})$ its exterior. Set 
$$\Gamma _{L}= \{\textrm{homotopy classes of paths in $E_{L}$ from a point on $\partial N_{ L}$ to a basepoint}\};$$ 
this will be the underlying set of the fundamental quandle. During homotopy, the basepoint is kept fixed, while the initial point of the path is free to move in $\partial N_{L}$.  For any point $p\in \partial N_{L}$, denote by $m_{p}$ the loop in $\partial N_{L}$ based at $p$, which follows around the meridian in the positive direction. Let two elements $a,b\in \Gamma _{L}$ be represented by the paths $\alpha $ and $\beta $ respectively. The \textit{fundamental quandle} $Q(L)$ of $L$ is the set $\Gamma _{L}$ with operation $$a\tr b := [\alpha \cdot \overline{\beta }\cdot m_{\beta (0)}\cdot \beta ]\;,$$ where $\overline{\beta }$ is the reverse path to $\beta $ and $\cdot $ denotes the composition of paths.

If $L_{0}\subset M$ is an empty submanifold of a connected manifold $M$, then we define its \textit{fundamental quandle} $Q(L_{0})$ to be the empty quandle. 
\end{definition}

\begin{example}\label{ex3} Given $P_n^{\phi}\in Ob(\mathcal T^{+})$, the underlying  set $\{p_1,\ldots, p_n\}$  is a codimension 2 submanifold of the disk $D$. We assume the orientation of the disk is induced by the standard orientation of $\mathbb R^2$, so that its  boundary is oriented counterclockwise, and that the meridian $m_{p_i}$ of a point $p_i$  is oriented counterclockwise (resp. clockwise) if $\phi(p_i)=+1$ (resp. $\phi(p_i)=-1$). Let $a_i$ be  the homotopy class of   the path $\alpha_i$ depicted in Figure \ref{fig_disk}, and let $\mu_i$ be the  homotopy class of  $m_{p_i}$ for $i=1,\ldots,n$.  The elements of $Q(P_n^{\phi})$ have the form $a_iw(b_1,\ldots,b_n)$,  where $w(b_1,\ldots,b_n)$ is a word in the letters $b_1,\ldots,b_n$ with  $b_i=a_i^{-1}\mu_ia_i$. It follows that $Q(P_n^{\phi})=F(a_1,\ldots,a_n)$.
\end{example}

\begin{figure}[h!]
\labellist
\small \hair 2pt
\pinlabel $\alpha_{1}$ at 165 600
\pinlabel $\alpha_{2}$ at 255 550
\pinlabel $\alpha_{n}$ at 665 580
\pinlabel $m_{p_{1}}$ at 115 330
\pinlabel $m_{p_{2}}$ at 240 330
\pinlabel $m_{p_{n}}$ at 730 330
\endlabellist
\begin{center}
\includegraphics[scale=0.20]{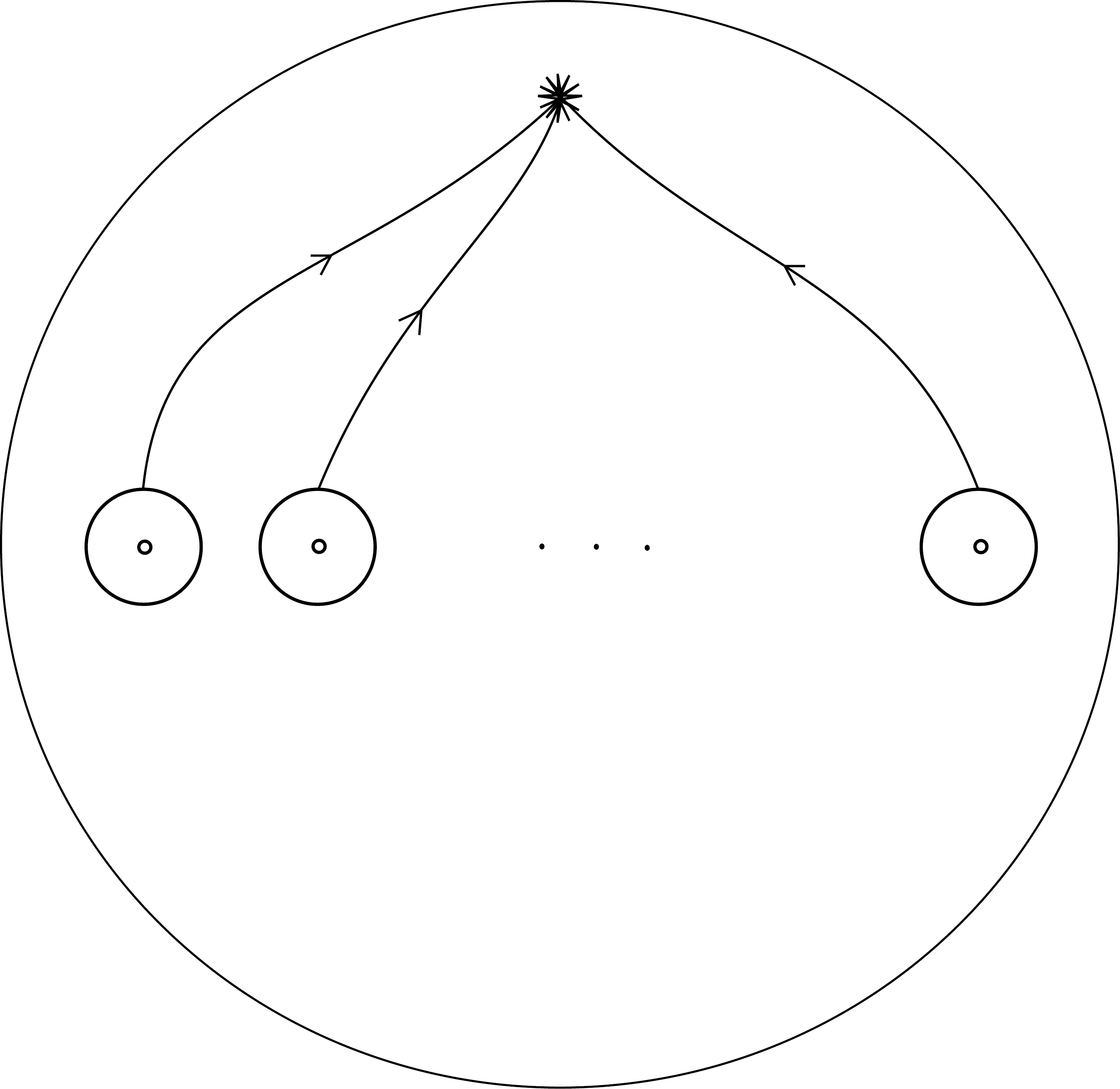}
\caption{Generators of $Q(P^{\phi}_{n})$.}
\label{fig_disk}
\end{center}
\end{figure}

\begin{remark}
\label{rem_sub}
If $L$ has nonempty boundary, the inclusion $\iota:\partial L\hookrightarrow L$ induces a quandle homomorphism $i:Q(\partial L)\to Q(L)$. Indeed, denoting by $\partial M_{0}$ the connected component of $\partial M$ that contains $\partial L$, any element of $Q(\partial L)$ is the homotopy class of a path in $cl(\partial M_{0}-N_{\partial L})$ from $\partial N_{\partial L}$ to the basepoint. The homotopy class of this path in $cl(M-N_{L })$ is clearly an element of $Q(L)$. The inclusion induces a homomorphism if $i(a\vartriangleright  b)=[\iota(\alpha \cdot \overline{\beta }\cdot m_{\beta (0)}\cdot \beta)]$ is equal to $i(a)\vartriangleright i(b)=[\iota(\alpha) \cdot \overline{\iota(\beta )}\cdot m_{\iota(\beta (0))}\cdot \iota(\beta) ]$. This holds if $\iota(m_{\beta (0)})=m_{\iota(\beta (0))}$, which means that a positive meridian in $\partial N_{\partial L}$ is still positive  in $\partial N_L$. For this to be true, we need to assume that the orientation of $\partial L$ is induced by that of $L$ or, equivalently, that $\iota$ is orientation preserving. 
\end{remark}
Now we are ready to define a functor from the oriented tangle category to the bordered quandle category.
 
 \begin{definition}  The functor $BQ\colon \mathcal{T}^+\to \qb$ from the oriented  tangle category to the bordered quandle category is defined as follows: 
 
  For an object $P_n^{\phi}\in Ob(\mathcal{T}^+)$ with $n\geq 1$, we set $BQ(P_n^{\phi})=Q(P_n^{\phi})$, which is a free quandle with $n$ generators by the Example \ref{ex3}. If $P_0\in Ob(\mathcal{T}^+)$ is the empty object, then $BQ(P_0)$ is the empty quandle. 

Given a morphism $\tau \in Hom(P_n^{\phi},P_m^{\psi})$, the inclusions $P_n^{\phi}\hookrightarrow \tau$ and $P_m^{\psi} \hookrightarrow \tau$ induce quandle homomorphisms $i_{\tau}^-\colon Q(\mathcal{P}^{\phi}_n)\to Q(\tau )$ and  $i_{\tau}^+\colon Q(P_m^{\psi})\to Q(\tau )$ by Remark \ref{rem_sub}. We define $BQ(\tau)$ to be the morphism of $\qb$, given by the triple $Q(P_n^{\phi})\stackrel{i_{\tau}^-}{\rightarrow}Q(\tau )\stackrel{i_{\tau}^+}{\leftarrow}Q(P_{m}^{\psi})$. We use the convention that if either $m$ or $n$ is equal to zero, then the corresponding homomorphism is omitted from the triple. We call $BQ$ the \textit{fundamental quandle functor}.
\end{definition}
 
In order to justify this definition, we need to prove that $BQ$ actually is a functor. 

\begin{Theorem} With the above notations, $BQ\colon \mathcal{T}^+\to \qb$ is a functor. 
\end{Theorem}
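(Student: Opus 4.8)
The plan is to verify the two defining properties of a functor: that $BQ$ respects identity morphisms, and that it respects composition. The identity morphism in $\mathcal T^+$ on an object $P_n^\phi$ is the trivial tangle $T_n^\phi$, whose exterior deformation retracts onto the exterior of $P_n^\phi$ in $D$ (the cylinder $D\times I$ retracts to $D\times\{0\}$ carrying the tangle to the endpoints), so $Q(T_n^\phi)\cong Q(P_n^\phi)=F(a_1,\dots,a_n)$ and the induced maps $i^-_{T_n^\phi}, i^+_{T_n^\phi}$ are both this isomorphism; hence $BQ(T_n^\phi)$ is the identity morphism $F\xrightarrow{\mathrm{id}}F\xleftarrow{\mathrm{id}}F$ in $\qb$. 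This is the easy half.

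The substantive part is functoriality under composition: given $\tau_1\in Hom(P_n^\phi,P_k^\chi)$ and $\tau_2\in Hom(P_k^\chi,P_m^\psi)$, I must show that $Q(\tau_1\tau_2)$ together with $i^-_{\tau_1\tau_2}$ and $i^+_{\tau_1\tau_2}$ is isomorphic, as a morphism of $\qb$, to the amalgamated product $Q(\tau_1)*_{(Q(P_k^\chi),\, i^+_{\tau_1},\, i^-_{\tau_2})}Q(\tau_2)$. Geometrically $\tau_1\tau_2$ is obtained by gluing $D\times I$ (containing $\tau_1$) to $D\times I$ (containing $\tau_2$) along the disk $D\times\{1\}=D\times\{0\}$, which meets the tangle transversely in the $k$ points of $P_k$. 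The key step is therefore a van Kampen–type theorem for the fundamental quandle: if $L=L_1\cup_{L_0}L_2$ where $L_0=L_1\cap L_2$ is a codimension-2 submanifold of a separating hypersurface meeting everything nicely, then $Q(L)\cong Q(L_1)*_{Q(L_0)}Q(L_2)$. I would establish this by passing to the fundamental groups of the complements, where the ordinary Seifert–van Kampen theorem applies to the decomposition $E_L = E_{L_1}\cup_{E_{L_0}} E_{L_2}$, and then translate: the fundamental quandle $Q(L)$ has a well-known description via $\pi_1$ of the complement acting on the set of meridian-based paths (Fenn–Rourke), and the pushout in $\mathcal Q$ can be checked against this description using the universal property — giving quandle homomorphisms $Q(\tau_i)\to Q(\tau_1\tau_2)$ that agree on $Q(P_k^\chi)$, hence a map from the pushout, and conversely building an inverse from paths in $E_{\tau_1\tau_2}$, each of which can be pushed off the gluing disk into one side or the other and decomposed along meridians of $P_k$.

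The main obstacle I anticipate is the bookkeeping around orientations and basepoints: the fundamental quandle is defined with a single fixed basepoint in $E_L$, whereas the two pieces $E_{\tau_1}$ and $E_{\tau_2}$ naturally carry basepoints on their respective boundary disks, so one must choose the basepoint on the gluing disk $D\times\{1/2\}$, connect it by a fixed arc to the actual basepoint, and check that the map $i^+_{\tau_1}$ from $Q(P_k^\chi)$ really is the one used in forming the amalgam — this is exactly where Remark \ref{rem_sub} and the orientation compatibility ($\overline{\eta}=\eta$-type conditions, positive meridians staying positive) are needed, and it is where a careless argument would silently use the wrong inclusion. A secondary point to be careful about is that the decomposition disk meets the tangle transversely but the tangle is only defined up to isotopy rel $D\times\partial I$; I would fix a representative in general position first, note that $BQ$ on a specific geometric representative is well-defined because isotopy rel boundary induces the identity on complements, and then run the argument on that representative. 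Once the van Kampen lemma for $Q$ is in hand, comparing the two triples as morphisms of $\qb$ is a formal diagram chase using the universal property of the pushout in $\mathcal Q$.
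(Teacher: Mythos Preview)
Your proposal is correct and follows the same overall strategy as the paper: check that $BQ$ sends the trivial tangle to the identity span, and then establish a van Kampen--type statement showing $Q(\tau_1\tau_2)$ is the pushout $Q(\tau_1)*_{Q(P_k^\chi)}Q(\tau_2)$, with the boundary inclusions matching up. Your attention to basepoints and to actually verifying the universal property (constructing the inverse) is well placed; the paper's own argument is terse here, essentially recording the maps $j_i\colon Q(\tau_i)\to Q(\tau_1\tau_2)$ induced by inclusion of the two cylinders, observing they agree on $Q(P_k^\chi)$, and then asserting the pushout identification.

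One small difference worth flagging: the paper works \emph{directly} with the path model of the fundamental quandle and never passes through $\pi_1$. Your proposed detour---prove Seifert--van Kampen for the complement groups, then translate back via the Fenn--Rourke description of $Q(L)$ as meridian-based paths acted on by $\pi_1$---can be made to work, but it is more delicate than it looks, since the fundamental quandle is \emph{not} determined by $\pi_1(E_L)$ alone (one also needs the peripheral data, i.e.\ the images of $\pi_1$ of each boundary torus or circle). In practice the cleanest route is the one you sketch at the end anyway: take a path in $E_{\tau_1\tau_2}$, put it in general position with respect to the splitting disk, and factor it through the two pieces, producing the inverse to the canonical map from the pushout. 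That is exactly the content behind the paper's ``It follows that\ldots'', and it avoids the translation step entirely.
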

\begin{proof}

For any object $P_n^{\phi}\in Ob(\mathcal{T}^+)$, its identity morphism $\textup{id}_{P_n^{\phi}}$ is the trivial tangle $T_n^{\phi}$ (see the left picture of Figure \ref{fig_cupcap2}). It is straightforward to check that the fundamental quandle of this tangle is the free quandle with the same set of generators as $Q(P_n^{\phi})$ (see Example \ref{ex3}),  so $BQ(T_n^{\phi})$ is given by the triple  $Q(P_n^{\phi})\stackrel{\textrm{id}}{\rightarrow}Q(T_n^{\phi})\stackrel{\textrm{id}}{\leftarrow }Q(P_n^{\phi})$, thus $BQ(\textup{id}_{P_n^{\phi}})=\textrm{id}_{BQ(P_n^{\phi})}$. 

Let $\tau _{1}\in Hom(P_n^{\phi},P_m^{\psi})$ be a $({\phi},{\psi})$-tangle and let $\tau _{2}\in Hom(P_m^{\psi},P_p^{\rho})$ be a $({\psi},{\rho})$-tangle such that $\partial ^{+}\tau _{1}=\partial ^{-}\tau _{2}$. Let $BQ(\tau _{1})$ and $BQ(\tau _{2})$ be given by triples $Q(P_n^{\phi})\stackrel{i_{\tau_1}^-}{\rightarrow}Q(\tau _1)\stackrel{i_{\tau_1}^+}{\leftarrow}Q(P_{m}^{\psi})$ and $Q(P_m^{\psi})\stackrel{i_{\tau_2}^-}{\rightarrow}Q(\tau _2)\stackrel{i_{\tau_2}^+}{\leftarrow}Q(P_{p}^{\rho})$. Then $BQ(\tau_1\tau _2)$ is given by the triple 

$$Q(P_n^{\phi})\stackrel{i_{\tau_1}^-}{\rightarrow}Q(\tau _1)*_{(Q(P_m^{\psi}),i_{\tau_1}^+,i_{\tau_2}^-)}Q(\tau _2)\stackrel{i_{\tau_2}^+}{\leftarrow}Q(P_{p}^{\rho})\;.$$ Indeed, the composite tangle $\tau _1 \tau _2$ is the $({\psi},{\rho})$-tangle, obtained by stacking the tangle $\tau _2$ over $\tau _1$ and identifying the common boundary $\partial ^{+}\tau _1=\partial ^{-}\tau _2$. Every element of $Q(\tau _1 \tau _2)$ is represented by a path from $\partial N_{\tau _1}$ or $\partial N_{\tau _2}$ inside $\partial N_{\tau _1 \tau _2}$ to the basepoint. So, for $i=1,2$ we can define quandle homomorphisms $j_i:Q(\tau_i)\to Q(\tau _1 \tau _2)$ such that  $j_1(i_{\tau_1}^+(a))=j_2(i_{\tau_2}^-(a))$ for every $a\in Q(P_{m}^{\psi})$. It follows that $Q(\tau _{1}\tau _2)=Q(\tau _1)*_{(Q(P_m^{\psi}),i_{\tau_1}^+,i_{\tau_2}^-)}Q(\tau _2)$ and the boundary inclusions give quandle homomorphisms $i_{\tau_1}^-\colon Q(P_n^{\phi})\to Q(\tau _1)*_{(Q(P_m^{\psi}),i_{\tau_1}^+,i_{\tau_2}^-)}Q(\tau _2)$ and $i_{\tau_2}^+\colon Q(P_{p}^{\rho})\to Q(\tau _1)*_{(Q(P_m^{\psi}),i_{\tau_1}^+,i_{\tau_2}^-)}Q(\tau _2)$, which concludes the proof.
\end{proof}

\begin{example}\label{ex4}
Consider the cup and plat tangles depicted in Figures \ref{fig_cupcap1} and \ref{fig_cupcap2}. The fundamental quandles $Q(\lambda_k^{\phi})$ and $Q(\mu_k^{\psi})$ are both free quandles on $k$ generators $x_1,\ldots,x_k$, where $x_i$ is the homotopy class of a path from the basepoint to the tubular neighborhood of $\chi_i$. Moreover, the functor $BQ$ sends:
\begin{itemize}
 \item  the cup tangle $\lambda_k^{\phi}$ into $Q(\lambda_k ^{\phi})\stackrel{i_k^+}{\leftarrow}Q(P_{2k}^{\phi})$, where $i_k^+:F(a_1,\ldots,a_{2k})\to F(x_1,\ldots,x_k)$ is the homomorphism, given by $i_k^+(a_h)=i_k^+(a_{2k-h+1})=x_h$ for $h=1,\ldots,k$;
 \item the plat tangle $\mu_k^{\psi}$ into $Q(\mu _k^{\psi} )\stackrel{j_k^+}{\leftarrow}Q(P_{2k}^{\psi})$, where $j_k^+:F(a_1,\ldots,a_{2k})\to F(x_1,\ldots,x_k)$ is the homomorphism, given by $j_k^+(a_{2h-1})=j_k^+(a_{2h})=x_h$ for $h=1,\ldots,k$.
\end{itemize}
\end{example}

\begin{remark}\label{rem_tensor}
 The functor $BQ$  behaves nicely with respect to the tensor product of tangles. Given $\tau _{1}\in Hom(P_{n_{1}}^{\phi_1},P_{m_1}^{\psi_1})$ and $\tau _{2}\in Hom(P_{n_2}^{\psi_2},P_{m_2}^{\phi_2})$, the functor sends their product $\tau _{1}\otimes\tau_{2}$ into $$Q(P_{n_1+n_2}^{(\phi_1,{\phi_2})})\stackrel{i_{\tau_1}^-\ast i_{\tau_2}^-}{\longrightarrow}Q(\tau _1)\ast Q(\tau_2)\stackrel{i_{\tau_1}^+\ast i_{\tau_2}^+}{\longleftarrow}Q(P_{m_1+m_2}^{(\psi_1,{\psi_2})})\;,$$ where $BQ(\tau_j):\ Q(P_{n_j}^{\phi_j})\stackrel{i_{\tau_j}^-}{\rightarrow}Q(\tau_j )\stackrel{i_{\tau_j}^+}{\leftarrow}Q(P_{m_j}^{\psi_j})$ for $j=1,2$.
 
 Moreover, if $BQ(\tau):\ Q(P_{n}^{\phi})\stackrel{i_{\tau}^-}{\rightarrow}Q(\tau)\stackrel{i_{\tau}^+}{\leftarrow}Q(P_{m}^{\psi})$, then  $BQ(\overline{\tau}):\ Q(P_{m}^{\overline{\psi}})\stackrel{i_{\tau}^+}{\rightarrow}Q(\overline{\tau})\stackrel{i_{\tau}^-}{\leftarrow}Q(P_{n}^{\overline{\phi}})$ and clearly $Q(\tau)\cong Q(\overline{\tau})$.
\end{remark}

\begin{remark}\label{rem_fr}
As we mentioned in Section \ref{sec2}, braids on $k$ strands are isomorphisms of $P_k$ in the tangle category. While the fundamental quandle of a braid is always free, our functor $BQ$ sends a braid $\beta $ into a triple $Q(P_k)\stackrel{i^{-}}{\rightarrow }Q(\beta )\stackrel{i^{+}}{\leftarrow}Q(P_k)$, where $i^-$ and $i^{+}$ are both quandle isomorphisms. Therefore the composition $(i^{+})^{-1}i^{-}=\beta _{*}\colon Q(P_{k})\to Q(P_k)$ is an automorphism of the free quandle. In \cite[pages 390--391]{FR} a similar remark was made in the category of racks (i.e. algebraic structures, satisfying properties (2) and (3) of Definition \ref{def:quandle}), and the authors proved that the map $\beta \mapsto \beta _{*}$ defines a faithful representation of the braid group in the automorphism group of the free rack. 
\end{remark}

As is the case with groups, every quandle admits a quandle presentation. Before recalling its definition, we need to fix some notation. Any free quandle  $F(X)$ is characterized by the following universal property: for any function $f$ from $X$ to a quandle $Q$, there exists a unique quandle homomorphism $\overline{f}\colon F(X) \to Q$, such that  $\overline{f} \iota =f$, where $\iota:X\to F(X)$ is the inclusion, given by $\iota (x)=(x,1)$.

\begin{definition} 
Let $Q$ be a quandle. A \textit{presentation} $\langle X\mid R\rangle$ for $Q$ is the data of two sets $X$ and $R\subset F(X)\times F(X)$, and a function $g:X\to Q$ that satisfies: 
\begin{itemize}
\item[(a)] $\overline{g}\times \overline {g}(R)\subset \Delta_Q$, where $\Delta_Q$ is the diagonal set in $Q\times Q$, and 
\item[(b)] for any quandle $Q'$ and any map $f: X\to Q'$ with $\overline{f}\times \overline{f}(R)\subset \Delta_{Q'}$, there exists a unique quandle homomorphism $\widetilde f:Q\to Q'$ such that $f=\widetilde fg$.
\end{itemize}
\end{definition}

To avoid cumbersome notations, we will usually replace a relation $(r,s)\in R$ by an equality $r=s$, and denote an  element $\iota(x)\in F(X)$ simply  by $x$. Moreover, we will write $\langle x_1,\ldots, x_m  \mid r_1,\ldots, r_n\rangle$  instead of $\langle \{x_1,\ldots, x_m\}\mid \{r_1,\ldots, r_n\}\rangle$. 

\begin{corollary}\label{cor2}
Given a $(\phi,\psi)$-tangle $\tau _1$ and a $(\psi,\rho)$-tangle $\tau_2$, let their fundamental quandles be given by presentations $Q(\tau_1)=\langle X_1\mid R_1\rangle$ and $Q(\tau_2)=\langle X_2\mid R_2\rangle$. \\If $BQ(\tau_1)\colon Q(P_n^{\phi})\stackrel{i_{\tau_1}^-}{\rightarrow}Q(\tau _1)\stackrel{i_{\tau_1}^+}{\leftarrow}Q(P_{m}^{\psi})$ and $BQ(\tau_2)\colon Q(P_m^{\psi})\stackrel{i_{\tau_2}^-}{\rightarrow}Q(\tau _2)\stackrel{i_{\tau_2}^+}{\leftarrow}Q(P_{p}^{\rho})$, then the fundamental quandle of $\tau _1\tau _2$ is given by the presentation
 $$ Q(\tau_1\tau_2)=\langle X_1\cup X_2\mid R_1\cup R_2\cup \{i_{\tau_1}^+(a_j)=i_{\tau_2}^-(a_j),\ j=1,\ldots m\}\rangle,$$ where  $Q(P_{m}^{\psi})=F( a_{1},\ldots ,a_{m})$. 
\end{corollary}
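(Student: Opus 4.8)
The plan is to combine the Theorem with the standard pushout description of quandle presentations, which is entirely analogous to the group case. First I would recall that if $Q_1=\langle X_1\mid R_1\rangle$ and $Q_2=\langle X_2\mid R_2\rangle$ are quandle presentations and $F=Q(P_m^\psi)=F(a_1,\dots,a_m)$ is the free quandle on the $a_j$, then the pushout $Q_1*_{(F,i_{\tau_1}^+,i_{\tau_2}^-)}Q_2$ has the presentation $\langle X_1\cup X_2\mid R_1\cup R_2\cup\{\,w_{i_{\tau_1}^+(a_j)}=w_{i_{\tau_2}^-(a_j)},\ j=1,\dots,m\,\}\rangle$, where $w_{i_{\tau_1}^+(a_j)}$ is any word in $F(X_1)$ mapping to $i_{\tau_1}^+(a_j)$ under $\overline{g_1}$ and likewise on the other side. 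This is just the universal property: a map out of the displayed quandle into a test quandle $Q'$ is a pair of maps out of $Q_1$ and $Q_2$ that agree after precomposition with $i_{\tau_1}^+$ and $i_{\tau_2}^-$, which is precisely the data of a map out of the pushout.

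Next I would invoke the Theorem, which gives $Q(\tau_1\tau_2)\cong Q(\tau_1)*_{(Q(P_m^\psi),i_{\tau_1}^+,i_{\tau_2}^-)}Q(\tau_2)$ as part of the statement that $BQ$ is a functor, with the two boundary inclusions $i_{\tau_1}^-$ and $i_{\tau_2}^+$ being the induced maps into the amalgam. Substituting the presentations $Q(\tau_1)=\langle X_1\mid R_1\rangle$ and $Q(\tau_2)=\langle X_2\mid R_2\rangle$ into the pushout presentation above yields exactly the claimed presentation $\langle X_1\cup X_2\mid R_1\cup R_2\cup\{i_{\tau_1}^+(a_j)=i_{\tau_2}^-(a_j),\ j=1,\dots,m\}\rangle$, with the understanding (as fixed in the notational conventions preceding the statement) that $i_{\tau_1}^+(a_j)$ and $i_{\tau_2}^-(a_j)$ denote chosen lifts to words in $F(X_1)$ and $F(X_2)$ respectively. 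The required function $X_1\cup X_2\to Q(\tau_1\tau_2)$ is $g_1\sqcup g_2$ followed by the amalgamation maps $j_1,j_2$ from the proof of the Theorem, and conditions (a) and (b) of the definition of presentation are checked directly: (a) because $R_1,R_2$ already hold in $Q(\tau_1),Q(\tau_2)$ and the new relations hold by construction of the amalgam, and (b) by the universal property of the pushout together with the universal property of each presentation.

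The main obstacle, such as it is, is purely bookkeeping rather than mathematical: one must be careful that the symbols $i_{\tau_1}^+(a_j)$ appearing in the new relators are interpreted as words in the generators $X_1$ (via a lift along $\overline{g_1}$) rather than as abstract elements of $Q(\tau_1)$, and similarly $i_{\tau_2}^-(a_j)$ as words in $X_2$; the particular choice of lift does not affect the presented quandle since different lifts differ by a consequence of $R_1$ (resp. $R_2$). I would also note the boundary case where $m=0$: then $Q(P_0^\psi)$ is the empty quandle, there are no new relations, and the amalgamated product degenerates to the coproduct $Q(\tau_1)*Q(\tau_2)$, so the presentation reads $\langle X_1\cup X_2\mid R_1\cup R_2\rangle$, consistent with Remark \ref{rem_tensor}. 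Apart from this, the proof is a one-line deduction from the Theorem and the definition of a quandle presentation, so I would keep it short.
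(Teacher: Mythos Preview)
Your argument is correct and is exactly the intended one: the paper states this result as a corollary with no proof, the implied argument being precisely your combination of the Theorem (which identifies $Q(\tau_1\tau_2)$ with the pushout $Q(\tau_1)*_{(Q(P_m^\psi),i_{\tau_1}^+,i_{\tau_2}^-)}Q(\tau_2)$) and the standard presentation of a pushout of presented quandles. Your remarks on interpreting $i_{\tau_1}^+(a_j)$ and $i_{\tau_2}^-(a_j)$ as words and on the degenerate case $m=0$ are apt but could be trimmed for a paper-level write-up.
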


In \cite{FR} the authors describe how to find a presentation for the fundamental quandle of a link in $S^3$. \label{presentation} The same procedure easily generalizes to a tangle in $D\times I$. Recall that a \textit{tangle diagram} is the image of a regular  projection of a tangle onto $[-1,1]\times I$, where $[-1,1]=D\cap \mathbb R$, together with the information about the overcrossings and undercrossings. If $D_{\tau}$ is a  diagram of $\tau$, denote by $x_1,\ldots, x_n$ the arcs of $D_{\tau }$, let $m$ be the number of crossing points and fix an orientation over $D_{\tau}$ (induced by that of $\tau$ if $\tau$ is oriented).   A presentation for $Q(\tau)$ is then given by: $$Q(\tau)=\left \langle x_1,\ldots, x_n\mid r_1,\ldots, r_m\right \rangle ,$$ where 
\begin{xalignat}{1}\label{qrelation}
& r_j\colon x_i=x_h\tr x_l
\end{xalignat} if the arcs of $D_{\tau }$ at the $j$-th crossing are labeled as depicted in the left part of Figure \ref{fig_ntangle}.

\end{section}

\begin{section}{Fundamental quandle of links as closures of tangles}
\label{sec4}
In this Section, we focus on the study of links. First we analyze how the functor introduced in Section \ref{sec3} can be used to compute the fundamental quandle of the closure of a tangle. Then we concentrate on some particular classes of links: periodic links, composite knots and satellite knots. \\

Let $\tau$ be a $(\phi,\phi)$-tangle and let  $BQ(\tau): \ Q(P_k^{\phi})\stackrel{i_{\tau}^-}{\rightarrow}Q(\tau )\stackrel{i^+_{\tau}}{\leftarrow}Q(P^{\phi}_{k})$. In Remark \ref{rem_closing}, we defined the classical closure of $\tau$ as 
  $\widehat{\tau}=\lambda^{\eta}_k (\tau\otimes T_k^{\psi})\overline{\lambda^{\eta}_k}$, where  $\psi(p_{k-i+1})=-\phi(p_i)$ for $i=1,\ldots,k$ and $\eta$ coincides with $\phi$ on the first $k$ points and with $\psi$ on the last $k$ points. By Remark \ref{rem_tensor}, we obtain  
$$BQ(\widehat{\tau}):\ Q(\lambda _k^{\eta})*_{(Q(P_{2k}^{\eta}),i_k^+,i_{\tau}^-\ast \textup{id}_{Q(P_k^{\psi})})}(Q(\tau)*Q(T^{\psi}_k))*_{(Q(P_{2k}^{\eta}),i_{\tau}^+\ast \textup{id}_{Q(P_k^{\psi})},i_k^+)}Q(\overline{\lambda _k^{\eta}}),$$
where $i_k^+$ is the map described in Example \ref{ex4}. \\

Using presentations $Q(\tau)=\langle Y\mid R\rangle$, $Q(T_k^{\psi})=F(z_1,\ldots,z_k)=F(Z)$, $Q(\lambda_k^{\eta})=F(x_1,\ldots,x_k)=F(X)$, $Q(\overline{\lambda_k^{\eta}})=F(\overline{x}_1,\ldots,\overline{x}_k)=F(\overline{X})$ and applying Corollary \ref{cor2} together with computations in Example \ref{ex4}, we obtain
\begin{align*}Q(\widehat{\tau})=\langle& Y\cup Z\cup X \cup \overline{X} \mid R\cup \{i_{\tau}^-(a_h)=x_h=z_{2k-h+1},  i_{\tau}^+(a_h)=\overline{x}_h=z_{2k-h+1},\ h=1,\ldots, k \}  \rangle,
\end{align*}
with $Q(P_{2k}^{\eta})=F(a_1,\ldots,a_{2k})$, that easily simplifies to 
\begin{equation}
\label{eq:ordinary}
 Q(\widehat{\tau})=\langle Y\mid R\cup \{i_{\tau}^-(a_h)=i_{\tau}^+(a_h), \ h=1,\ldots, k\}  \rangle.
\end{equation}

Now let $\tau $ be a $(\phi,\psi)$-tangle such that $\phi(p_{2i-1})=-\phi(p_{2i})$ and $\psi(p_{2j-1})=-\psi(p_{2j})$ for $i=1,\ldots, n$ and $j=1,\ldots,m$. A similar computation for the plat closure $\widetilde{\tau}=\mu_n^{\phi}\tau\overline{\mu^{\overline{\psi }}_m}$ gives 
$$BQ(\widetilde{\tau}):\  Q(\mu_n^{\phi })*_{(Q(P_{2n}^{\phi}),j_n^+,i_{\tau}^-)}Q(\tau)*_{(Q(P_{2m}^{\psi}),i_{\tau}^+,j_m^+)}Q(\overline{\mu _m^{\overline{\psi }}}),$$
where $BQ(\tau):\  Q(P_{2n}^{\phi})\stackrel{i_{\tau}^-}{\rightarrow}Q(\tau )\stackrel{i^+_{\tau}}{\leftarrow}Q(P_{2m}^{\psi})$ and $j_n^+,j_m^+$ are the maps described in Example \ref{ex4}. Moreover, given presentations $Q(\tau)=\langle Y\mid R\rangle$, $Q(\mu_n^{\phi})=F(x_1,\ldots,x_n)=F(X)$ and $Q(\overline{\mu_m^{\overline{\psi}}})=F(\overline{x}_1,\ldots,\overline{x}_m)=F(\overline{X})$, we obtain 
\begin{align*}Q(\widetilde{\tau})=\langle& Y\cup X \cup \overline{X} \mid R\cup \{i_{\tau}^-(a_{2h-1})=x_h=i_{\tau}^-(a_{2h}),\\
& \ i_{\tau}^+(b_{2j-1})=\overline{x}_h=i_{\tau}^+(b_{2j}),\ h=1,\ldots, n, \ j=1,\ldots, m  \}  \rangle,
\end{align*}
where $Q(P_{2n}^{\phi})=F(a_1,\ldots,a_{2n})$ and $Q(P_{2m}^{\psi})=F(b_1,\ldots,b_{2m})$, that easily simplifies to
\begin{align}
\label{eq:plat}
Q(\widetilde{\tau})=\langle & Y\mid R\cup \{i_{\tau}^-(a_{2h-1})=i_{\tau}^-(a_{2h}), \ i_{\tau}^+(b_{2j-1})=i_{\tau}^+(b_{2j}),\ \ h=1 \ldots, n, \ j=1,\ldots, m  \}  \rangle.
\end{align}
In the remainder of this Section, we will apply presentations \eqref{eq:ordinary} and \eqref{eq:plat} to the computation of the fundamental quandle of periodic links, composite knots and satellite knots.

\begin{subsection}{Periodic links}
\label{appl1}
 
  A link $L$ in $S^3$ is called \textit{periodic} with period $p$ or $p$-periodic, if there is an orientation preserving homeomorphism $\theta_p: S^3\to S^3$ of order $p$  with a set of fixed points $h\cong S^1$ disjoint from $L$, which maps $L$ to itself. Periodic links are important for the knot theory in lens spaces, since they are lifts of links in lens spaces under the universal covering $S^3\to L(p,q)$ (see for example \cite{CM}). \\
  
 Consider $S^3$ as $\mathbb R^3\cup\{\infty\}$ with $\infty\in h$. A $p$-periodic link admits a regular diagram $D_L$, contained in a plane $\pi\subset \mathbb R^3$ orthogonal to $h$, such that the restriction of $\theta_p$ to $\pi$ is a rotation of order $p$ around the point $O\in \pi\cap h$ and  fixes the diagram  $D_L$ (see \cite[page 267]{BZ}). Let $A$ be the convex angle, delimited by a ray $r$ exiting from $O$, and its image $\theta_p(r)$: the set $D_L\cap A$ can clearly be seen as a diagram of a $(k,k)$-tangle $\tau$, where $k$ is the cardinality of $D_L\cap r$. So $L$ can be expressed as $\widehat{\tau^p}$ (see Remark \ref{rem_closing} and Figure \ref{fig:periodic}). On the contrary, given any $(k,k)$-tangle $\tau$, the link $\widehat{\tau^p}$ is a $p$-periodic link. 
 
\begin{figure}[h!]
\labellist
\normalsize \hair 2pt
\pinlabel $O$ at 480 -25
\pinlabel $A$ at 505 85
\endlabellist
\begin{center}
\includegraphics[scale=0.23]{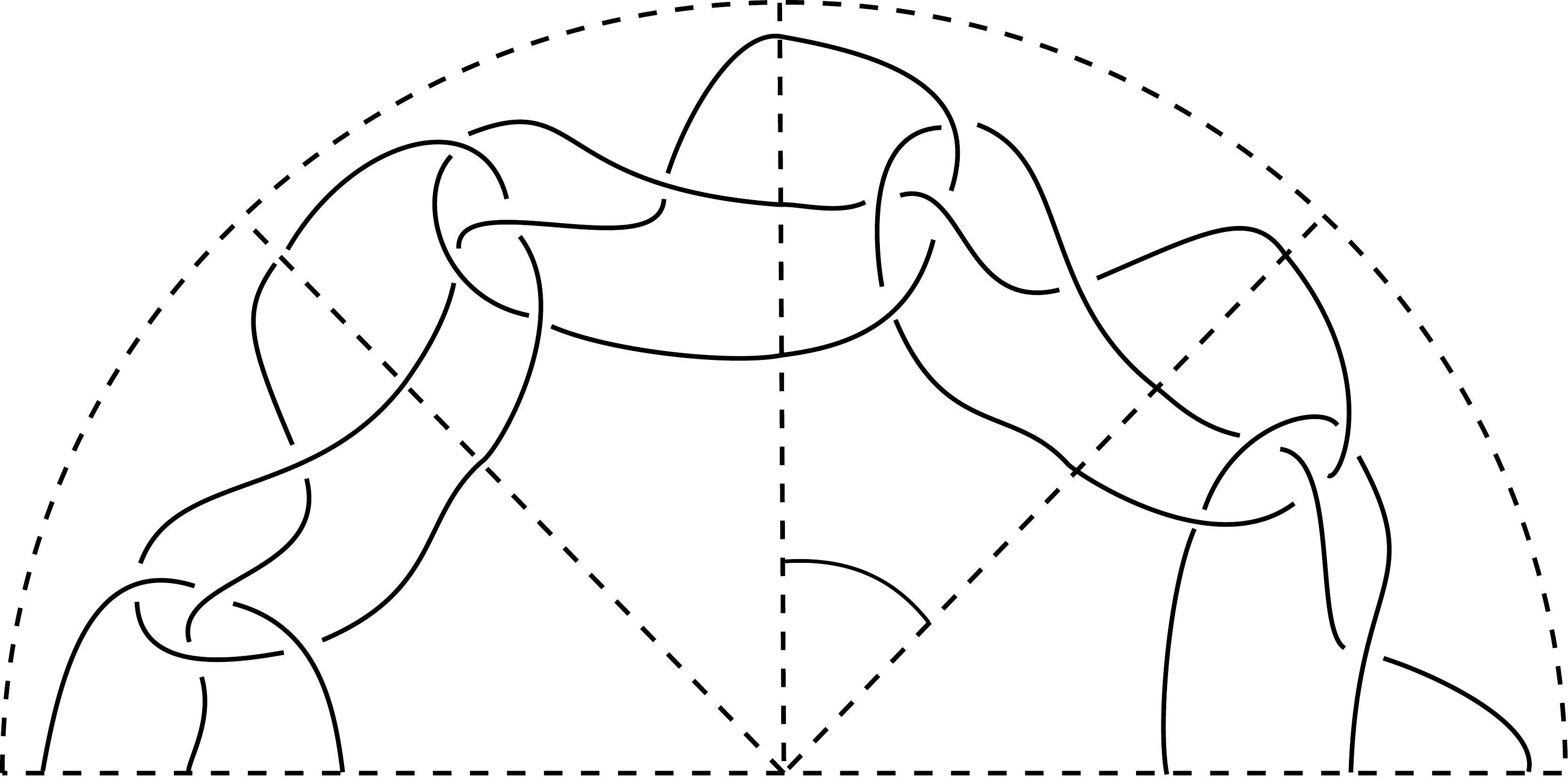}
\caption{Part of a diagram of a periodic knot, which may be expressed as $\widehat{\tau ^{8}}$ for a $(3,3)$-tangle $\tau $. }
\label{fig:periodic}
\end{center}
\end{figure}

 If $\tau$ is  a $(\phi,\phi)$-tangle with presentation $\langle y_{i,1}\mid  r_{j,1}\ \ i=1,\ldots ,m,\ j=1,\ldots, n\rangle$ and such that  $BQ(\tau):\ Q(P_k^{\phi})\stackrel{i_{\tau}^-}{\rightarrow}Q(\tau )\stackrel{i_{\tau}^+}{\leftarrow}Q(P_{k}^{\phi})$, then it follows from Corollary \ref{cor2} and presentation \eqref{eq:ordinary} that the fundamental quandle of the $p$-periodic link $\widehat{\tau^p}$  has the following presentation:
 \begin{align*}
 \langle y_{i,z} \mid &\  \psi ^z(r_{j,z}), \psi ^{z-1}(i_{\tau}^+(a_h))=\psi ^{z}(i_{\tau}^-(a_h)), 
 \ i=1,\ldots ,m,\ j=1,\ldots,    n,\\ &  z=1,\ldots, p,\ h=1,\ldots, k \rangle,  
 \end{align*}
where $Q(P_{k}^{\phi})=F(a_1,\ldots, a_k)$ and  the function $\psi$ acts on a word   shifting  by one mod $p$ the second index of each letter appearing in the word.

\begin{figure}[h!]
\labellist
\normalsize \hair 2pt
\pinlabel $y_1$ at 740 90
\pinlabel $y_2$ at 1015 90
\pinlabel $y_3$ at 850 300
\pinlabel $y_4$ at 750 315
\pinlabel $y_5$ at 1000 315
\pinlabel $a_1$ at 700 30
\pinlabel $a_2$ at 1060 30 
\pinlabel $a_1$ at 700 380
\pinlabel $a_2$ at 1060 380 
\endlabellist
\begin{center}
\includegraphics[scale=0.27]{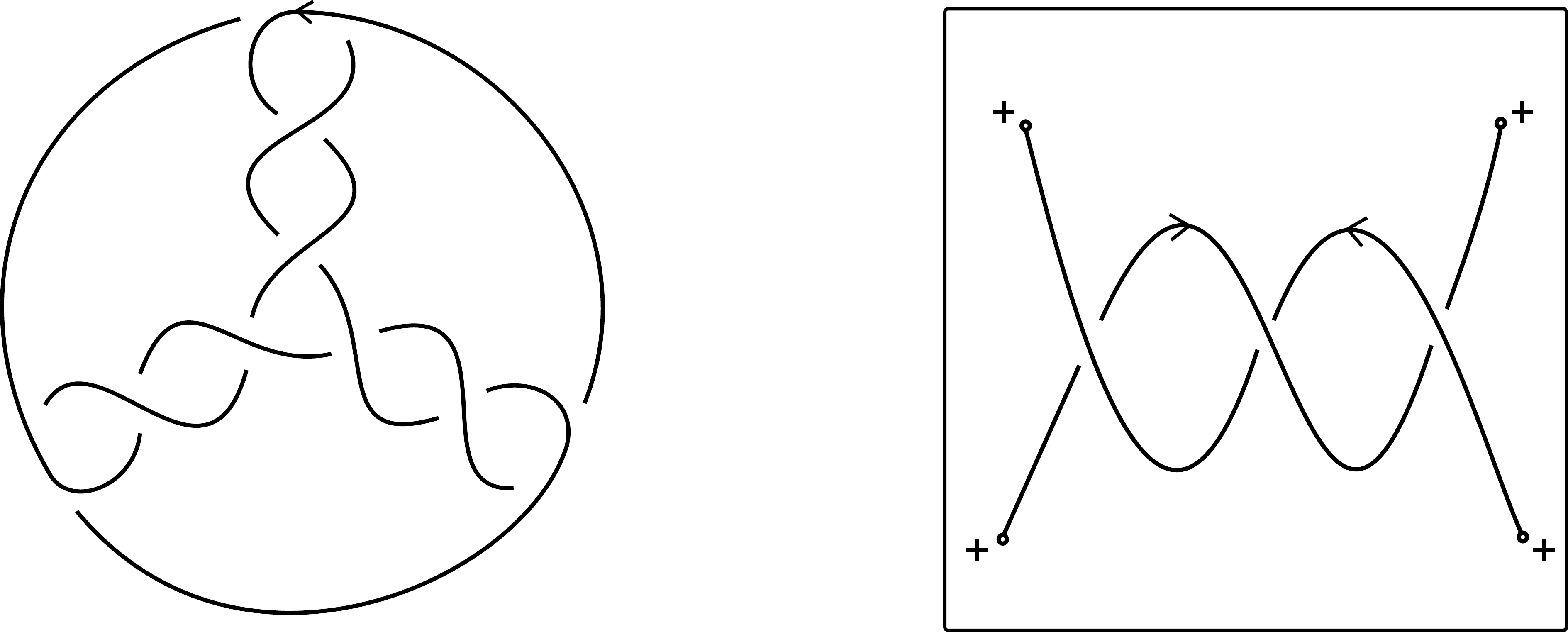}
\caption{Diagram of the pretzel  knot $P(3,3,3)$ (left) and its corresponding tangle (right).}
\label{fig:pretzel}
\end{center}
\end{figure}

 \begin{example}[Fundamental quandle of the pretzel knot $P(3,3,3)$] Consider the knot $9_{35}$, depicted in the left part of Figure \ref{fig:pretzel}, that is also known as the pretzel knot $P(3,3,3)$. As the diagram shows, it is a 3-periodic knot and can be described as $\widehat{\tau ^{3}}$, where $\tau$ is the  $(2,2)$-tangle depicted in the right part of the figure. Using the procedure, described on p.\pageref{presentation}, we can easily obtain $Q(\tau)=\langle y_1,y_2,y_3,y_4,y_5\, |\, y_3\tr y_4=y_1, y_4\tr y_3=y_2, y_5\tr y_2=y_3\rangle $. The functor $BQ$ maps $\tau $ into $BQ(\tau )\colon \quad F(a_1,a_2)\stackrel{i_{\tau}^{-}}{\rightarrow }Q(\tau )\stackrel{i_{\tau}^{+}}{\leftarrow }F(a_1,a_2)$, where $i_{\tau}^{-}(a_{1})=y_{1}$, $i_{\tau}^{-}(a_2)=y_2$, $i_{\tau}^{+}(a_1)=y_4$ and $i_{\tau}^{+}(a_2)=y_5$. So the fundamental quandle of $P(3,3,3)$ has a presentation 
 \begin{align*}
  \langle y_{i,z}\mid &\ 
  y_{3,z}\tr y_{4,z}=y_{1,z}, y_{4,z}\tr y_{3,z}=y_{2,z}, y_{5,z}\tr y_{2,z}=y_{3,z},  y_{4,z}=y_{1,z+1},\\ &\ y_{5,z}=y_{2,z+1}, z=1,2,3, \ i=1,\ldots ,5\rangle,
 \end{align*} that simplifies to
  \begin{equation*}
  \langle y_{1,z},y_{2,z}\mid (y_{2,z+1}\tr y_{2,z})\tr y_{1,z+1}=y_{1,z}, \ y_{1,z+1}\tr (y_{2,z+1}\tr y_{2,z})=y_{2,z}, z=1,2,3\rangle,
 \end{equation*} where all the second indices are counted mod $3$.
 \end{example}

\end{subsection}

\begin{subsection}{Composite knots}
\label{appl2}

Let $K_1$ and $K_2$ be two nontrivial knots in the solid cylinder $D\times I$. Choose decompositions $K_1=\widehat{\tau_1}$ and $K_2=\widehat{\tau_2}$, where $\tau_1,\tau_2$ are $(1,1)$-tangles.  The connected sum $K_1\sharp K_2$ of $K_1$ and $K_2$ can be expressed as $\lambda_1\left(\tau_1\otimes\tau _2\right)\overline{\lambda}_1$. While tangles $\tau _1$ and $\tau _2$ clearly depend on the chosen decomposition, the connected sum $K_{1}\sharp K_2$ is uniquely defined by $K_1$ and $K_2$. 

\begin{figure}[h!]
\labellist
\normalsize \hair 2pt
\pinlabel $\overline{\lambda }_{1}$ at 420 -30
\pinlabel $\tau _1$ at 130 310
\pinlabel $\tau _2$ at 700 310
\pinlabel $\lambda _1$ at 420 560
\pinlabel $x_1$ at 130 380
\pinlabel $y_1$ at 700 380
\pinlabel $X$ at 130 250
\pinlabel $Y$ at 700 250 
\pinlabel $K_1$ at 400 400
\pinlabel $K_2$ at 970 400
\endlabellist
\begin{center}
\includegraphics[scale=0.20]{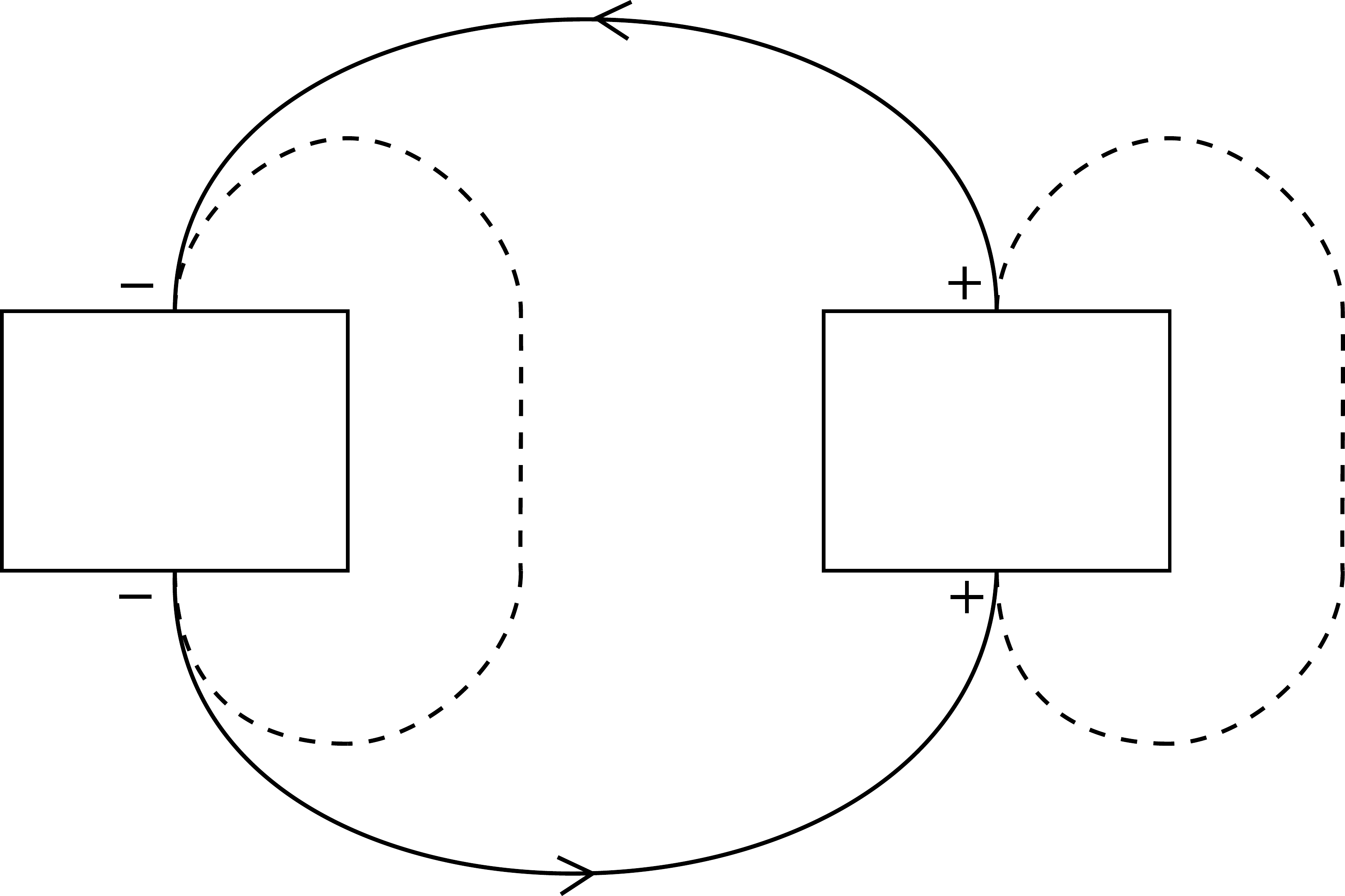}
\caption{Connected sum of knots.}
\label{fig:sum}
\end{center}
\end{figure}

We can assume that $\tau_1$ is oriented from  top to bottom (so both of its boundary points carry the negative sign), while $\tau_2$ is oriented from bottom to top (so both of its boundary points carry the positive sign). Presentation \eqref{eq:ordinary} for the fundamental quandle of the classical closure of a tangle then specializes to the following presentations: 
\begin{align*}Q(K_1)=&\langle x_1,\ldots,x_k,X\mid r_1,\ldots r_h,x_1=X\rangle, \\ \quad Q(K_2)=&\langle y_1,\ldots,y_n,Y\mid s_1,\ldots s_m,y_1=Y\rangle,
\end{align*}
where $x_1=i^+_{\tau_1}(a_1)$, $X=i^-_{\tau_1}(a_1)$, $y_1=i^+_{\tau_2}(b_1)$ and $Y=i^-_{\tau_2}(b_1)$ are the arcs depicted in Figure \ref{fig:sum} and $Q(P_1^-)=F(a_1), Q(P_1^+)=F(b_1)$. 
Using these presentations and applying Corollary \ref{cor2}, we obtain the following presentation for the fundamental quandle of $K_1\sharp K_2$:  
\begin{align}
\label{eq:composite}
  Q(K_1\sharp K_2)=\langle& x_1,\ldots,x_k, y_1\ldots, y_n,X,Y\mid r_1,\ldots,r_h, s_1,\ldots, s_m,  x_1=y_1, X=Y\rangle.
  \end{align}

\begin{figure}[h!]
\labellist
\pinlabel $x_1$ at 40 140
\pinlabel $x_2$ at -5 20
\pinlabel $x_3$ at -10 160
\pinlabel $X$ at 100 20 
\pinlabel $Y$ at 230 20 
\pinlabel $y_1$ at 290 140
\pinlabel $y_3$ at 335 160 
\pinlabel $y_2$ at 335 20 
\normalsize \hair 2pt
\endlabellist
\begin{center}
\includegraphics[scale=0.45]{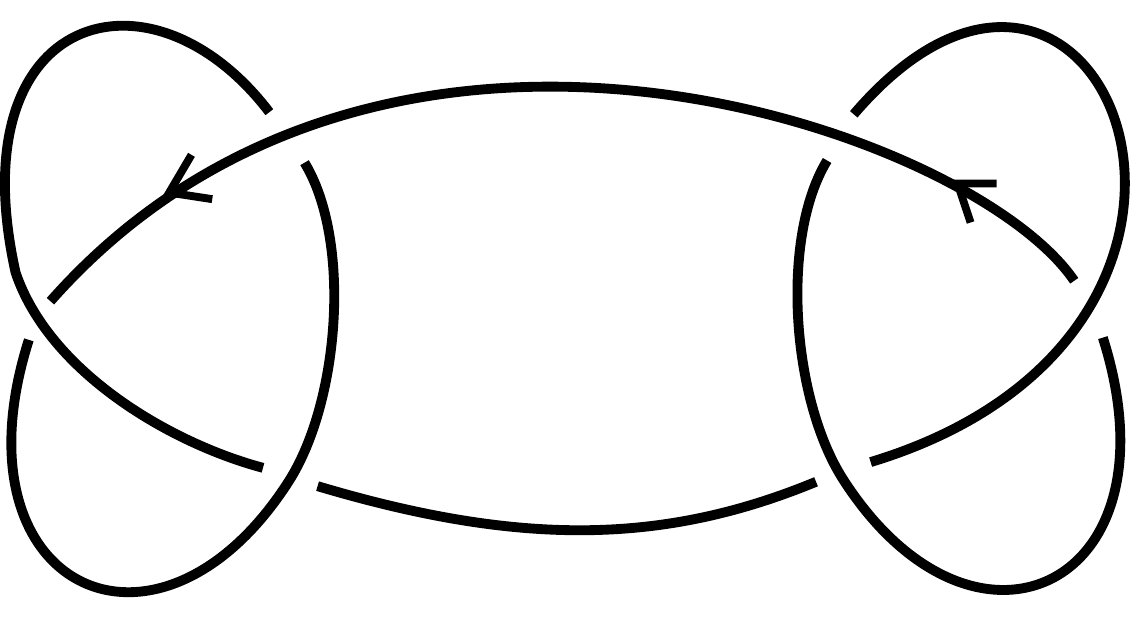}
\caption{The granny knot.}
\label{fig:granny}
\end{center}
\end{figure}

\begin{example}[Fundamental quandle of the granny knot]\label{ex2} 
Consider the knot depicted in Figure \ref{fig:granny}: its name is the granny knot and it is obtained as a connected sum of the left-handed trefoil knot $3_1$  with its mirror image $3_1^*$ (the so called right-handed trefoil knot). Using the left (respectively right) part of the diagram in Figure \ref{fig:granny}, we obtain the following presentations
\begin{xalignat*}{1}
& Q(3_1)=\langle x_1,x_2,x_3,X\, |\, x_3 \tr x_1 =x_2,\, x_2 \tr x_3 = x_1,\, X\tr x_2 = x_3,\, x_1 = X\rangle, \\
& Q(3_1^*)=\langle y_1,y_2,y_3,Y\, |\, y_3 \tr y_1 =y_2,\, Y \tr y_2 = y_3,\, y_2 \tr y_3 = y_1,\, y_1 = Y\rangle.
\end{xalignat*} So the fundamental quandle of the granny knot has a presentation 
\begin{align*}
 \langle & x_1,x_2,x_3,X, y_1,y_2,y_3,Y\mid x_3 \tr x_1 =x_2,\, x_2 \tr x_3 = x_1,\, X\tr x_2 = x_3, y_3 \tr y_1 =y_2,\,\\ & \ Y \tr y_2 = y_3,\, y_2 \tr y_3 = y_1,\, y_1 = x_1,\, X=Y\rangle, 
\end{align*} that reduces to 
\begin{align*}
 \langle & x_2,x_3,y_2,y_3 \mid x_3 \tr (x_2 \tr x_3) =x_2,\, (y_3 \tl y_2)\tr x_2 = x_3, y_3 \tr (x_2 \tr x_3) = y_2,\\ &\  x_2 \tr x_3 = y_2\tr y_3 \rangle.
\end{align*}
\end{example}
 
 Using  presentation \eqref{eq:composite}, a relationship between the counting invariants of $K_{1}$, $K_{2}$ and that of $K_{1}\sharp K_{2}$ may be obtained, as it was done in \cite{CS}. We recall that a \textit{coloring} of a tangle $\tau$ by a finite quandle $T$, or a $T$-\textit{coloring}, is a quandle homomorphism $f\colon Q(\tau)\to T$. Denote by $Col_{T}(\tau)$ the set of all colorings of a tangle $\tau$ by a fixed quandle $T$. The number of all colorings $|Col_{T}(\tau)|$ is called the \textit{counting invariant} of $\tau$ with respect to $T$. If the coloring quandle $T$ has some special properties, the counting invariant of a composite knot $K_{1}\sharp K_{2}$ may be derived from the counting invariants of $K_{1}$ and $K_{2}$. 

\begin{definition} Let $Q$ be a quandle. Denote by $\alpha \colon Q\to Aut(Q)$ the map, given by $\alpha(y)=\alpha_y$ for every $y\in Q$ (see Definition \ref{def:quandle}). The quandle $Q$ is \textit{connected} if $\alpha(Q)$ acts transitively on $Q$. The quandle $Q$ is called \textit{faithful} if the map $\alpha$ is an injection. 
\end{definition}

A $T$-coloring of a tangle $\tau$ can be used to color its endpoints, i.e., the elements of $\partial^+\tau\cup\partial^-\tau$: given a point $(p_i,1)$ in $\partial^+ \tau$, the \textit{color} of $(p_i,1)$ is $f\circ i^+_{\tau}(a_i)$, where $a_i$ is the generator of $Q(\partial^+\tau)$, corresponding to the point $(p_i,1)$ ($a_i$ is the homotopy class of the path $\alpha _{i}$, depicted in Figure \ref{fig_disk}). An analogous definition can be given for points in $\partial^-\tau$ by replacing $+$ with $-$.

\begin{lemma}\cite[Lemma 5.6]{TN} \label{lemmaF} Let $T$ be a finite faithful quandle. Then for any $T$-coloring of a $(1,1)$-tangle, the colors at the two endpoints of the tangle agree. 
\end{lemma}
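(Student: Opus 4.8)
The plan is to analyze a $T$-coloring of a $(1,1)$-tangle $\tau$ directly through the presentation of $Q(\tau)$ coming from a tangle diagram, as described on p.\pageref{presentation}. Fix a diagram $D_\tau$ with arcs $x_1,\ldots,x_n$; each crossing gives a relation $x_i = x_h \tr x_l$. A $T$-coloring is then a map $c$ assigning to each arc an element of $T$ consistent with these relations. Let $X = i^-_\tau(a_1)$ and $x_1 = i^+_\tau(a_1)$ be the generators of $Q(\partial^-\tau)$ and $Q(\partial^+\tau)$ respectively, realized as the (unique) arcs meeting the bottom and top boundary disks; I want to show $c(X) = c(x_1)$ for every coloring $c$.

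The key idea is to use the associated permutation braid/monodromy: travelling along the single strand of the $(1,1)$-tangle from the bottom endpoint to the top endpoint, each time we pass \emph{under} a crossing whose overstrand is colored $t \in T$, the color of the understrand changes by the automorphism $\alpha_t$ (or $\alpha_t^{-1}$, depending on orientation sign). Hence if $t_1,\ldots,t_r$ are the colors of the successive overstrands encountered, with signs $\varepsilon_1,\ldots,\varepsilon_r \in \{\pm 1\}$, then $c(x_1) = c(X) \tr_{\varepsilon_1} t_1 \tr_{\varepsilon_2} \cdots \tr_{\varepsilon_r} t_r$, i.e. $c(x_1) = \beta\bigl(c(X)\bigr)$ where $\beta = \alpha_{t_r}^{\varepsilon_r}\cdots\alpha_{t_1}^{\varepsilon_1} \in \mathrm{Aut}(T)$. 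First I would make this precise by inducting on the number of crossings along the strand. Next, I would observe that this same automorphism $\beta$ fixes the element $c(X)$ itself: indeed, because the tangle has exactly two endpoints on a single component, the "color graph" argument shows that the meridian loop around the strand at the bottom, pushed around, gives $c(X) \tr_{\varepsilon_1} t_1 \cdots = c(X)$ reading off the relation $X = X$ (the strand closes up onto itself when we form $\widehat\tau$, but more elementarily: the cumulative effect of $\beta$ on the understrand that reenters is trivial because... ) — here I must be careful, and this is the crux.

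The honest way to pin down the obstacle: the statement is really that $\beta\bigl(c(X)\bigr) = c(X)$, equivalently $\alpha_{c(X)}$ and $\beta$ commute in the appropriate sense, or that $\beta$ lies in the stabilizer of $c(X)$. The right tool is \emph{faithfulness}. For a connected quandle this is classical (the inner automorphism group acts so that such a $\beta$ is forced to fix the colored strand's value), but for a general faithful $T$ one argues as follows: consider the closure $\widehat\tau$, a knot $K$. Its fundamental quandle has presentation \eqref{eq:ordinary}, namely $Q(\widehat\tau) = \langle Y \mid R \cup \{i^-_\tau(a_1) = i^+_\tau(a_1)\}\rangle = \langle Y \mid R \cup \{X = x_1\}\rangle$. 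A $T$-coloring of $\tau$ in which $c(X) \neq c(x_1)$ would \emph{not} extend to $K$, so this does not immediately help; instead, I would use the structure of $\mathrm{Inn}(T)$ and faithfulness to show $\beta$ acts trivially on $c(X)$: the element $\beta \in \mathrm{Inn}(T)$ is, by the quandle axioms applied along the strand, precisely the inner automorphism $\alpha_{c(X)}$ conjugated by a product of $\alpha_{t_i}$'s — and tracking the endpoint arcs, $\beta = \alpha_{c(x_1)} \cdot \alpha_{c(X)}^{-1} \cdot(\text{something that is itself }\beta)$, so $\alpha_{c(X)} = \alpha_{c(x_1)}$ as elements of $\mathrm{Aut}(T)$; faithfulness (injectivity of $\alpha$) then yields $c(X) = c(x_1)$.

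So the plan in order: (i) set up the diagrammatic presentation of $Q(\tau)$ and identify $X, x_1$ with the boundary arcs; (ii) follow the unique strand from bottom to top and prove by induction on crossings that $c(x_1) = \beta(c(X))$ for an explicit $\beta \in \mathrm{Inn}(T)$ built from the colors $t_i$ of overstrands with signs; (iii) run the same bookkeeping on the \emph{overstrand side} — each $t_i$ is itself carried by an arc of the single strand (since $\tau$ has one component), so the $t_i$ are all values of $c$ on sub-arcs, and chasing the relations shows $\beta$ equals $\alpha_{c(X)}^{-1}\alpha_{c(x_1)}$ up to conjugation, hence $\alpha_{c(X)} = \alpha_{c(x_1)}$ in $\mathrm{Aut}(T)$; (iv) invoke faithfulness of $T$ to conclude $c(X) = c(x_1)$, which is exactly that the two endpoint colors agree. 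The main obstacle is step (iii): making rigorous the claim that the accumulated automorphism $\beta$ is forced to conjugate $\alpha_{c(X)}$ to $\alpha_{c(x_1)}$ — this is where the fact that a $(1,1)$-tangle is a \emph{single strand} (rather than several) is essential, and where I expect I may instead need to quote a standard fact about $\mathrm{Inn}(T)$-orbits, or alternatively reduce to Lemma~\ref{lemmaF}'s source \cite{TN} verbatim; I would double-check whether a shorter argument via the longitude of the knot $\widehat\tau$ avoids this entirely.
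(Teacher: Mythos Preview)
The paper does not prove this lemma; it is quoted from \cite[Lemma 5.6]{TN} and used as a black box, so there is no in-paper argument to compare against. As for your proposal: steps (i), (ii), and (iv) are fine, but the gap you yourself flag at (iii) is genuine, and the reasoning you sketch there does not close it. From $c(x_1)=\beta(c(X))$ one automatically gets $\beta\,\alpha_{c(X)}\,\beta^{-1}=\alpha_{\beta(c(X))}=\alpha_{c(x_1)}$, which by itself says nothing about whether $\alpha_{c(X)}=\alpha_{c(x_1)}$; and the assertion that ``$\beta$ equals $\alpha_{c(X)}^{-1}\alpha_{c(x_1)}$ up to conjugation, hence $\alpha_{c(X)}=\alpha_{c(x_1)}$'' is both unproved and a non sequitur even if granted.

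The fix is precisely the longitude idea you mention in your last sentence. In $\pi_1$ of the tangle complement, the meridian $m$ of the strand and the path $\ell$ running along $\partial N_\tau$ from one endpoint to the other commute, since both lie on the annulus $\partial N_\tau$; under the induced map to $\mathrm{Inn}(T)$ one has $m\mapsto \alpha_{c(X)}$ and $\ell\mapsto \beta$, so $\beta$ commutes with $\alpha_{c(X)}$, and combining with the automatic relation above yields $\alpha_{c(X)}=\alpha_{c(x_1)}$. Equivalently, and purely diagrammatically: the ordered product of the $\alpha_{c(\cdot)}$ over the arcs meeting a generic horizontal level of $D_\tau$ is unchanged when the level passes a crossing (from $x_i=x_h\tr x_l$ one checks $\alpha_{x_i}\alpha_{x_l}=\alpha_{x_l}\alpha_{x_h}$); for a $(1,1)$-tangle this product is the single factor $\alpha_{c(X)}$ at the bottom and $\alpha_{c(x_1)}$ at the top. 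Either route replaces your step (iii) and, together with faithfulness, completes the proof.
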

It follows from this lemma that if $T$ is a faithful quandle, every coloring of a $(1,1)$-tangle $\tau $ induces a coloring of the link $\widehat{\tau }$ and vice-versa. Applying this to tangles $\tau _1 $ and $\tau _2 $, such that $K_i=\widehat{\tau_i}$ for $i=1,2$, we obtain the following restatement of results  \cite[Lemma 3.4, Lemma 3.6]{CS}:

\begin{lemma}Let $T$ be a finite connected faithful quandle, then $$|T|\, |Col_{T}(K_{1}\sharp K_{2})|=|Col _{T}(K_{1})|\, |Col_{T}(K_{2})|\;.$$
\end{lemma}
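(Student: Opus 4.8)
The plan is to count $T$-colorings of $K_1\sharp K_2$ via the presentation \eqref{eq:composite} and compare with the product of the counting invariants of $K_1$ and $K_2$. By the presentation, a $T$-coloring of $K_1\sharp K_2$ is a pair of maps $g\colon Q(\tau_1)\to T$ and $h\colon Q(\tau_2)\to T$ (equivalently, a $T$-coloring of $\tau_1$ and one of $\tau_2$) that agree on the two identified generators: $g(x_1)=h(y_1)$ and $g(X)=h(Y)$. By Lemma \ref{lemmaF}, since $T$ is faithful, every $T$-coloring of the $(1,1)$-tangle $\tau_i$ already has matching colors at its two endpoints, i.e. $g(x_1)=g(X)$ and $h(y_1)=h(Y)$; moreover colorings of $\tau_i$ are exactly colorings of $\widehat{\tau_i}=K_i$. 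Hence the single condition $g(x_1)=h(y_1)$ already forces $g(X)=h(Y)$, and
$$|Col_T(K_1\sharp K_2)|=\#\{(g,h)\in Col_T(K_1)\times Col_T(K_2)\mid \mathrm{col}(g)=\mathrm{col}(h)\},$$
where $\mathrm{col}(g)\in T$ denotes the common color of the endpoints of $\tau_1$ under $g$ (and similarly $\mathrm{col}(h)$).

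Next I would organize $Col_T(K_i)$ by endpoint color. For $t\in T$ let $C_i(t)=\{g\in Col_T(K_i)\mid \mathrm{col}(g)=t\}$, so that $Col_T(K_i)=\bigsqcup_{t\in T}C_i(t)$ and $|Col_T(K_1\sharp K_2)|=\sum_{t\in T}|C_1(t)|\,|C_2(t)|$. The key point is that $|C_i(t)|$ is independent of $t$: this is where connectedness of $T$ enters. Given $t,t'\in T$, since $T$ is connected there is an automorphism $\varphi\in\alpha(Q)\subseteq Aut(T)$ (a product of the maps $\alpha_y$) with $\varphi(t)=t'$; post-composition with $\varphi$ sends a coloring $g$ of $\tau_i$ to another coloring $\varphi\circ g$ (quandle homomorphisms compose), and it sends endpoint color $t$ to endpoint color $t'$, hence restricts to a bijection $C_i(t)\to C_i(t')$ with inverse given by $\varphi^{-1}$. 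Therefore $|C_i(t)|=|Col_T(K_i)|/|T|$ for every $t$, and
$$|Col_T(K_1\sharp K_2)|=\sum_{t\in T}\frac{|Col_T(K_1)|}{|T|}\cdot\frac{|Col_T(K_2)|}{|T|}=\frac{|Col_T(K_1)|\,|Col_T(K_2)|}{|T|},$$
which rearranges to the claimed identity $|T|\,|Col_T(K_1\sharp K_2)|=|Col_T(K_1)|\,|Col_T(K_2)|$.

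The main obstacle, and the step deserving the most care, is the correspondence between colorings of the $(1,1)$-tangle $\tau_i$ and colorings of the closure $K_i=\widehat{\tau_i}$, together with the claim that the ``endpoint color'' is a well-defined invariant of a coloring of $K_i$ rather than of the choice of tangle decomposition. The first part follows from presentation \eqref{eq:ordinary} for the classical closure combined with Lemma \ref{lemmaF} (faithfulness forces $i_\tau^-(a_1)$ and $i_\tau^+(a_1)$ to receive equal colors, which is precisely the extra relation imposed when passing to $\widehat{\tau}$, so the two coloring sets coincide canonically). For the second part, one should note that although $|C_i(t)|$ a priori depends on the decomposition, the conclusion $|C_i(t)|=|Col_T(K_i)|/|T|$ shows it does not; alternatively the color of the endpoints corresponds, under the faithfulness bijection, to the color of the distinguished arc of $K_i$ met by the separating disk, and the connectedness argument shows all arcs of a given colored diagram of $K_i$ realizing this role are equivalent in count. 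A final small point to verify is that $\alpha(Q)$ genuinely consists of automorphisms of $T$ (axiom (2) of Definition \ref{def:quandle}) and that $\varphi\circ g$ is again a quandle homomorphism, which is immediate.
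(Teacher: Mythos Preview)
Your proof is correct and follows essentially the same approach as the paper: both use presentation \eqref{eq:composite} together with Lemma \ref{lemmaF} to identify $Col_T(K_1\sharp K_2)$ with pairs of colorings of $K_1$ and $K_2$ agreeing at the distinguished arc, and then use connectedness of $T$ (via post-composition with inner automorphisms) to show the fibers over each color $t\in T$ have equal size $|Col_T(K_i)|/|T|$. If anything, your formulation is slightly more careful, since you allow $\varphi$ to be a product of the maps $\alpha_y$, whereas the paper asserts the existence of a single $c\in T$ with $a\tr c=b$, which is stronger than what connectedness guarantees.
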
 
\begin{proof}Let $K$ be any knot and fix a diagram $D_K$ of $K$. To each arc of the diagram $D_K$ there corresponds an element $x$ in $Q(K)$:  denote by $Col_{(T,a)}(D_K,x)$ the set of all colorings of the diagram $D_K$ such that the arc labeled by $x$ receives the color $a\in T$, i.e., the elements $f$ of $Col_{T}(K)$ such that  $f(x)=a$. Then we have $Col_{T}(K)=\sqcup _{a\in T}Col_{(T,a)}(D_K,x)$. Choose any $a,b\in T$. Since $T$ is connected, there exists an element $c\in T$ such that $a\tr c=b$. Define a map $\phi \colon Col_{(T,a)}(D_K,x)\to Col_{(T,b)}(D_K,x)$ by $\phi (f)(y)=f(y)\tr c$ and observe that $\phi $ is a bijection. Therefore $|Col_{(T,a)}(D_K,x)|= |Col_{(T,b)}(D_K,x)|$ and consequently $|Col_{T}(K)|=|T| |Col_{(T,a)}(D_K,x)|$. 

For two knots $K_{1}$ and $K_{2}$, let $\tau_i$ be a (1,1)-tangle such that $\widehat{\tau_i}=K_i$, for $i=1,2$. Then the fundamental quandle of $K_1\sharp K_2=\tau_1\tau_2$ admits the presentation \eqref{eq:composite} that stems from presentations $Q(\tau _1 )=\langle x_1,\ldots,x_k,X\mid r_1,\ldots r_h\rangle $ and $Q(\tau _2 )=\langle y_1,\ldots,y_n,Y\mid s_1,\ldots s_m\rangle $, see Figure \ref{fig:sum}. For every coloring $f\in Col_{T}(K_{1}\sharp K_{2})$, the restriction $f|_{\tau _{i}}$ for $i=1,2$ induces a coloring of the tangle $\tau _{i}$, that in turn induces a coloring of $K_{i}$ by Lemma \ref{lemmaF}. Both induced co\-lo\-rings agree at the endpoints of the $(1,1)$-tangles: $f(x_{1})=f(y_1 )=f(X)=f(Y)$. Conversely, for any two colorings $f_{1}\in Col_{T}(K_{1})$ and $f_{2}\in Col_{T}(K_{2})$ such that $f_{1}(x_{1})=f_{2}(y_{1})$, the map $f\colon Q(K_{1}\sharp K_{2})\to T$, given by $f(x)=f_{i}(x)$ for any $x\in Q(K _{i})$ with $i=1,2$, induces a coloring of the composite knot $K_{1}\sharp K_{2}$. Thus, we have a bijection $$\bigcup _{a\in T}\left (Col_{(T,a)}(D_{K_{1}},x_{1})\times Col_{(T,a)}(D_{K_{2}},y_{1})\right )\to Col_{T}(K_{1}\sharp K_{2})\;,$$ where $D_{K_i}$ denotes the diagram of $K_{i}$ for $i=1,2$. It follows that $|T|\cdot \frac{|Col_{T}(K_{1})|}{|T|} \cdot \frac{|Col_{T}(K_{2})|}{|T|}=|Col_{T}(K_{1}\sharp K_{2})|$. 
\end{proof}
 
 \end{subsection}

\begin{subsection}{Satellite knots} 

\label{appl3}

A \textit{satellite knot} is a knot $K$ in $S^{3}$, whose complement contains an incompressible torus that is not parallel to the boundary of the regular neighborhood of $K$. \\

Given an oriented  satellite knot $K$, it is possible to associate to $K$ a couple of oriented knots, the companion and the embellishment, as follows. Choose an incompressible, non boundary parallel torus $T$ in the complement of $K$. Let $V$ be the solid torus, bounded by $T$: the core of $V$ (associated with $T$) is called the \textit{companion} of $K$ and is denoted by $C$. Note that $K\subset V$, and orient $C$ so that $K$ is homologous in $V$ to $wC$ with $w$ a non-negative integer number (there a choice to be made when $w=0$). The number $w$ is called \textit{the winding number} of $K$. Let $U$ be an unknotted solid torus $U\subset S^3$ and let  $f\colon V\to U\subset S^3$ be an orientation and longitude preserving  homeomorphism: the knot $E=f(K)$ is called the \textit{embellishment} of $K$. \\

Various invariants of $K$ may be expressed in terms of $C$ and $E$: for example in \cite{LM}, the Alexander module is computed in such a  way. Analogously, we would like to express the fundamental quandle of a satellite knot $K$  in terms of the fundamental quandles of its companion and  embellishment. In order to do this, we need to introduce the following definition. 

\begin{definition}
 Let $\tau$ be an $(n,m)$-tangle with $\chi$ components. For each point $(p_i,k)$ for $k=0,1$  in $\partial \tau$, consider disjoint closed intervals  $I(p_i,k)\subset ([-1,1]\times\{k\})$ having $(p_i,k)$ as a midpoint and not containing any other point $(p_j,k)$ for $i\neq j$.  For each  component $\tau_i$ of $\tau$ with endpoints $(p_r,k)$, $(p_j,h)$ with $k,h\in\{0,1\}$, consider an embedding $f_i:I\times I\to D\times I$ such that: 
 \begin{itemize}
  \item $f_i(\{\frac12\}\times I)=\tau_i$,
  \item $f_i(I\times\{0\})=I(p_r,k)$ and    $f_i(I\times\{1\})=I(p_j,h)$, 
  \item the projections of  $f_i(\{0\}\times I)$ and $\tau_i$ onto $[-1,1]\times I$ do not intersect,
  \item $f_i(I\times I)\cap f_j(I\times I)=\emptyset$ for $i\ne j$. 
 \end{itemize}
    Given a $(\phi,\psi)$-tangle $\tau$, for each natural number $N\geq 2$ we define the $N$-\textit{cable} of $\tau$, denoted by $N\tau$, as the $(N\phi,N\psi)$-tangle given by $\cup_{i=1}^{\chi} f_i(\{t_1,\ldots, t_N\}\times I)$, where $t_1,\ldots, t_N$ are points in $I$ such that $0=t_1<t_2<\cdots<t_{N-1}<t_N=1$ (see Figure \ref{fig_cable} for an example with $N=2$). 
    
\end{definition}

\begin{figure}[h!]
\labellist
\pinlabel $(p_{1},1)$ at 10 390
\pinlabel $(p_{2},1)$ at 130 390
\pinlabel $(p_3,1)$ at 250 390
\pinlabel $(p_4,1)$ at 360 390
\pinlabel $(p_1,0)$ at 10 -10
\pinlabel $(p_2,0)$ at 130 -10
\pinlabel $I(p_{1},1)$ at 580 390
\pinlabel $I(p_{2},1)$ at 700 390
\pinlabel $I(p_3,1)$ at 820 390
\pinlabel $I(p_4,1)$ at 930 390
\pinlabel $I(p_1,0)$ at 580 -20
\pinlabel $I(p_2,0)$ at 700 -20
\normalsize \hair 2pt
\endlabellist
\begin{center}
\includegraphics[scale=0.35]{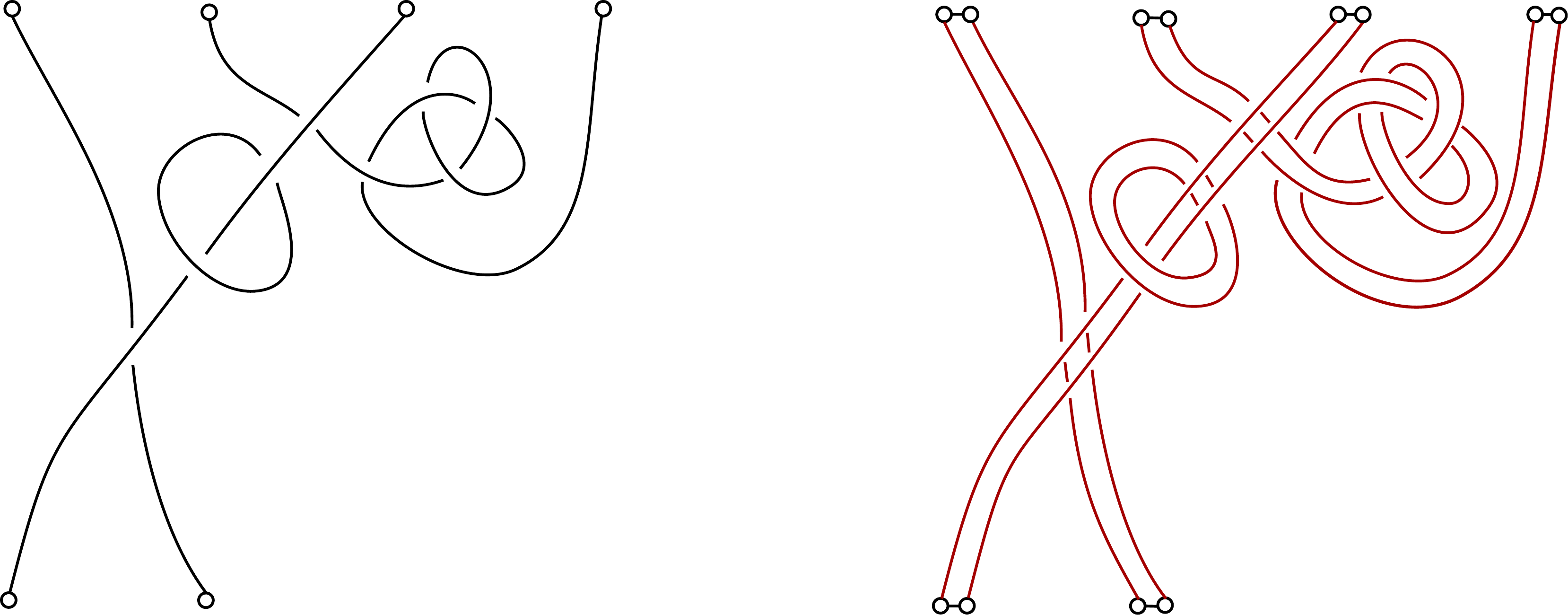}
\vspace{5mm}
\caption{A tangle $\tau $ (left) and its 2-cable $2\tau$ (right).}
\label{fig_cable}
\end{center}
\end{figure}

Note that $T_k=kT_1$ and $\lambda_k=k\lambda_1$. \\

Let $K$ be an oriented satellite knot and let $T$ be an incompressible, non boundary parallel torus in the complement of $K$. As above, let $V$ be the solid torus bounded by $T$, let $C$ be the companion of $K$ (associated to $T$) and let $f:V\to U$ with $f(K)=E$.  

We choose a parametrization of the trivial solid torus $U=D\times S^1$ that satisfies the following:
\begin{enumerate}
\item[(i)] $f(C)$ is the oriented curve $t\mapsto (0,e^{2i\pi t})\in  \{0\}\times S^1$ for $t\in [0,1]$,
\item[(ii)] there exists a $t_1\in (0,1)$ so that $D\times [t_1,1]\cap E$ is a trivial tangle $T_k$  in the solid cylinder $D\times [t_1,1]$ for some $k\geq 1$.
\item[(iii)] the two discs $D_0=D\times\{1\}$ and $D_1=D\times\{e^{2i\pi t_1}\}$ intersect $E$ transversely.
\end{enumerate}
It follows that $\tau_E=E\cap D\times[0,t_1]$ is a $(\phi,\phi)$-tangle, whose orientation is induced by that of $E$, and assuming that $D_0$ is the bottom disc of the cylinder, we have $E=\widehat{\tau_E}$. Note that the tangle $\tau_{f(C)}=D\times [0,t_1]\cap f(C)$ is $T_1$ and that the sum of signs of $\phi$ equals the winding number of $K$.

Consider a solid torus, standardly embedded in $S^3$ and containing $V$, such that $f^{-1}(D_0)$ and $f^{-1}(D_1)$ are contained  in meridional disks and  $C=\widehat{\tau_C}$, where $\tau_C$ is the $(1,1)$-tangle $f^{-1}(\tau_{f(C)})$: clearly $k\tau_C=f^{-1}(T_k)$. Inside the solid cylinder, $f^{-1}(\tau_E)$ is isotopic to $\tau_E$. Therefore we obtain $K=\lambda_k^{\eta}(\tau_E\otimes k\tau_C)\overline{\lambda^{\eta}_k}$, where $k\tau_{C}$ is a $(\psi,\psi)$-tangle with $\psi(p_{k-i+1})=-\phi(p_i)$ for $i=1,\ldots,k$, while $\eta$ coincides with $\phi$ on the first $k$ points and with $\psi$ on the last $k$ points. As a consequence,  we have that 
$$Q(K)=Q(\lambda_k^{\eta})\ast_{(Q(P_{2k}^{\eta}),i_k^+,i_{\tau_E}^-\ast i^-_{k\tau_{C}})} (Q(\tau_E)\ast Q(k\tau_C))\ast_{(Q(P_{2k}^{\eta}),i_{\tau_E}^+\ast i_{k\tau_{C}}^+,i_k^+)}Q(\overline{\lambda_k^{\eta}})\;,$$
where $i_k^+$ is the homomorphism described in Example \ref{ex4},  $BQ(\tau_E)$ is $Q(P_k^{\phi})\stackrel{i_{\tau_E}^-}{\rightarrow}Q(\tau _E)\stackrel{i_{\tau_E}^+}{\leftarrow}Q(P_{k}^{\phi})$ and $BQ(k\tau_C)$ is $Q(P_k^{\psi})\stackrel{i_{k\tau_C}^-}{\rightarrow}Q(k\tau_C)\stackrel{i_{k\tau_C}^+}{\leftarrow}Q(P_{k}^{\psi})$.

\begin{remark}
 Note that  for $k=1$, we obtain $K=\lambda_1(\tau_E\otimes \tau_C)\overline{\lambda}_1$ and so $K=E\sharp C$: indeed, composite knots are a particular case of satellite knots. 
\end{remark}

The following proposition describes how to obtain a presentation of $Q(k\tau)$ given a presentation of $Q(\tau)$ and a choice of signs on the boundary points.

%, where $(-\vec{\epsilon})_j=-\epsilon_{k-j+1}$

\begin{proposition}\label{prop1}
 Let $\tau$ be an oriented  tangle and consider its regular diagram $D_{\tau}$ in $[-1,1]\times I$. Denote by $x_1,\ldots, x_r$ the arcs (or curves) of $D_{\tau}$  and by $C_1,\ldots, C_s$ its  crossings. Fix  $\vec{\epsilon}\in \{-1,1\}^N$, and let  $\vec{\epsilon}\tau$ be the $N$-cable of $\tau$, oriented so that the $j$-th copy of $\tau$  has the same orientation as $\tau$ if $\epsilon_j=+1$ and the opposite one otherwise. Then we have:
 $$Q(\vec{\epsilon}\tau)=\langle \{x_{ij},\ i=1,\ldots, r, \ j=1,\ldots, N\} \mid \{r_{kj},\  k=1,\ldots, s,\  j=1,\ldots, N\}\rangle,$$
 where $r_{kj}: x_{ij}= \left(\cdots\left(\left( x_{hj}\rhd^{\epsilon_1} x_{l1}\right) \vartriangleright^{\epsilon_2} x_{l2}\right)\vartriangleright^{\epsilon_{3}}\cdots \vartriangleright^{\epsilon_{N-1}} x_{l(N-1)}\right) \vartriangleright^{\epsilon_N} x_{lN}$  if the arcs of the crossing $C_k$ are labeled as  depicted in the left part of Figure \ref{fig_ntangle}, and where $\vartriangleright^{1}=\vartriangleright$,  $\vartriangleright^{-1}=\vartriangleleft$.
\end{proposition}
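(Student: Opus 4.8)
The plan is to work with a diagram of the cable directly and apply the standard quandle presentation of a tangle diagram recalled on p.~\pageref{presentation}, and then to remove the generators introduced by the cabled crossings by Tietze transformations. First I would fix a diagram $D_{\vec{\epsilon}\tau}$ of $\vec{\epsilon}\tau$ obtained from $D_{\tau}$ by replacing each strand with $N$ parallel copies, the $j$-th copy of each component oriented as $\tau$ if $\epsilon_j=+1$ and reversed if $\epsilon_j=-1$; the conditions in the definition of the cable (the band being flat and unknotted in projection) guarantee that away from the crossings $C_1,\dots,C_s$ this diagram is just $N$ disjoint parallel copies of $D_{\tau}$, while near a crossing $C_k$ -- where, as in the left part of Figure \ref{fig_ntangle}, the over-arc $x_l$ passes over the incoming under-arc $x_h$ and yields the outgoing under-arc $x_i$ -- it is the ``cabled crossing'' of the right part of Figure \ref{fig_ntangle}: the $N$ parallel copies $x_{l1},\dots,x_{lN}$ of $x_l$ run, uninterrupted, over the $N$ copies of $x_h$, creating $N^2$ sub-crossings.

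Next I would enumerate the arcs and crossings of $D_{\vec{\epsilon}\tau}$. Since by definition an arc of a diagram passes \emph{under} nothing in its interior, each arc $x_i$ of $D_{\tau}$ gives rise to exactly $N$ arcs $x_{i1},\dots,x_{iN}$ of $D_{\vec{\epsilon}\tau}$ (interrupted only at their two ends); in addition, at each $C_k$ and for each $j$, the $j$-th copy of $x_h$ is cut, as it weaves under $x_{l1},\dots,x_{lN}$ in turn, into $N-1$ \emph{intermediate} arcs $y^{(k)}_{j,1},\dots,y^{(k)}_{j,N-1}$; set $y^{(k)}_{j,0}=x_{hj}$ and $y^{(k)}_{j,N}=x_{ij}$. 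The crossings are the $N^2$ sub-crossings at each $C_k$: for $j,t\in\{1,\dots,N\}$, the sub-crossing where $x_{lt}$ passes over the $j$-th under-copy at its $t$-th step has over-arc $x_{lt}$, incoming under-arc $y^{(k)}_{j,t-1}$ and outgoing under-arc $y^{(k)}_{j,t}$. (Arcs ending on $\partial\tau$, and closed components without self-crossings, are covered without change, contributing $N$ generators each and no new relations.)

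Now I would read off the relation at that sub-crossing. If $\epsilon_t=+1$, then $x_{lt}$ carries the orientation of $\tau$ and the sub-crossing is a parallel translate of $C_k$, so relation \eqref{qrelation} reads $y^{(k)}_{j,t}=y^{(k)}_{j,t-1}\vartriangleright x_{lt}$; if $\epsilon_t=-1$ the over-copy is run backwards, the crossing sign is reversed, and the relation becomes $y^{(k)}_{j,t}=y^{(k)}_{j,t-1}\vartriangleleft x_{lt}$; in both cases it is $y^{(k)}_{j,t}=y^{(k)}_{j,t-1}\vartriangleright^{\epsilon_t} x_{lt}$. Reversing the $j$-th under-copy (that is, $\epsilon_j=-1$) does \emph{not} alter this relation: it only rewrites it from the other end, and $b=a\vartriangleright c$ is equivalent to $a=b\vartriangleleft c$ by the quandle axioms, so the orientation of the under-strand is immaterial. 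Hence the procedure of p.~\pageref{presentation} gives
\[
Q(\vec{\epsilon}\tau)=\big\langle\ \{x_{ij}\}\cup\{y^{(k)}_{j,t}\}\ \mid\ y^{(k)}_{j,t}=y^{(k)}_{j,t-1}\vartriangleright^{\epsilon_t} x_{lt}\ \big\rangle,
\]
with $1\le i\le r$, $1\le k\le s$, $1\le j,t\le N$.

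Finally, for each fixed $k$ and $j$ I would use the $N-1$ relations with $t=1,\dots,N-1$ to eliminate, successively, the intermediate generators $y^{(k)}_{j,1},\dots,y^{(k)}_{j,N-1}$ (each solved for in terms of the previously retained generators) and substitute the resulting quandle words into the relation with $t=N$; this leaves precisely $x_{ij}=(\cdots((x_{hj}\vartriangleright^{\epsilon_1}x_{l1})\vartriangleright^{\epsilon_2}x_{l2})\cdots)\vartriangleright^{\epsilon_N}x_{lN}$, which is $r_{kj}$, the copies $x_{l1},\dots,x_{lN}$ being numbered in the order in which the under-copy meets them, as fixed by the right part of Figure \ref{fig_ntangle}. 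Carrying this out for all $k,j$ leaves the generators $\{x_{ij}\}$ and the relations $\{r_{kj}\}$, which is the asserted presentation. The step I expect to demand the most care is the orientation bookkeeping at the cabled crossing -- confirming that the exponent at the $t$-th step is exactly $\epsilon_t$ and is independent of $\epsilon_j$, and committing once and for all to the labelling order of the $N$ over-copies (i.e.\ pinning down the conventions of Figure \ref{fig_ntangle}); the small but essential point that each $x_i$ contributes only $N$ generators (so no unaccounted-for generators appear besides the $y^{(k)}_{j,t}$) also deserves an explicit remark. An alternative route, decomposing $\tau$ into elementary tangles and applying Corollary \ref{cor2} to the cables of the pieces, would also work, but tracking the mixed orientations $\vec{\epsilon}$ on each elementary piece seems no simpler than the diagrammatic argument above.
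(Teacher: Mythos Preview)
Your proposal is correct and follows essentially the same route as the paper: build the cabled diagram by $N$ parallel copies, read off the $N^{2}$ sub-crossing relations at each $C_{k}$ with intermediate arcs $y$, and eliminate those $y$'s by successive substitution to obtain the relations $r_{kj}$. Your write-up is in fact more explicit than the paper's on two points the authors leave implicit --- the observation that the exponent at the $t$-th step depends only on the over-copy orientation $\epsilon_{t}$ (not on $\epsilon_{j}$), and the bookkeeping for boundary arcs and closed components --- so nothing is missing.
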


\begin{figure}[h!]
\labellist
\pinlabel $x_{i}$ at 0 320 
\pinlabel $x_{l}$ at 300 320 
\pinlabel $x_{h}$ at 300 620 
\pinlabel $x_{i_{1}}$ at 700 -20 
\pinlabel $x_{i_2}$ at 640 40 
\pinlabel $x_{i_N}$ at 530 160 
\pinlabel $x_{l1}$ at 1450 490 
\pinlabel $x_{l2}$ at 1330 370 
\pinlabel $x_{l(N-1)}$ at 1120 130
\pinlabel $x_{lN}$ at 960 10  
\pinlabel $x_{h_{1}}$ at 1460 730 
\pinlabel $x_{h_2}$ at 1400 790 
\pinlabel $x_{h_N}$ at 1290 910 
\pinlabel $y_{11}$ at 1275 515 
\pinlabel $y_{21}$ at 1170 630 
\pinlabel $y_{N1}$ at 1050 755 
\pinlabel $y_{12}$ at 1180 425
\pinlabel $y_{22}$ at 1080 545
\pinlabel $y_{N2}$ at 955 660 
\pinlabel $y_{1(N-2)}$ at 1070 280 
\pinlabel $y_{2(N-2)}$ at 910 400 
\pinlabel $y_{N(N-2)}$ at 790 525 
\pinlabel $y_{1(N-1)}$ at 940 140 
\pinlabel $y_{2(N-1)}$ at 780 280 
\pinlabel $y_{N(N-1)}$ at 670 410 
\normalsize \hair 2pt
\endlabellist
\begin{center}
\includegraphics[scale=0.25]{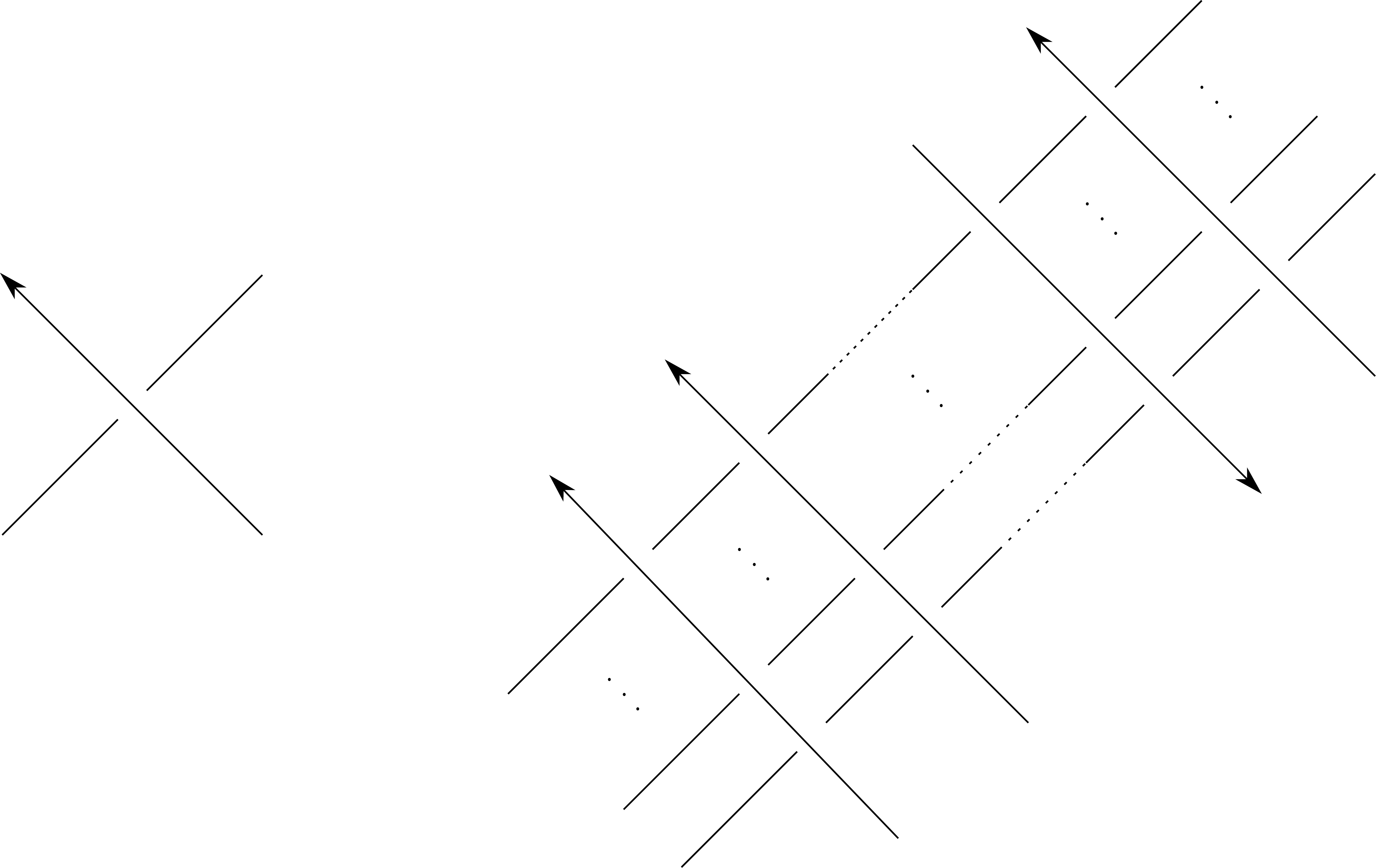}
\vspace{5mm}
\caption{Crossings of $D_{\vec{\epsilon}\tau}$, corresponding to a crossing of $D_{\tau }$.}
\label{fig_ntangle}
\end{center}
\end{figure}

\begin{proof}
A diagram $D_{\vec{\epsilon}\tau}$ in $[-1,1]\times I$ for $\vec{\epsilon}\tau$ is obtained by replacing each arc (or circle) of $D_{\tau}$ with $N$ copies ``parallel'' to each other, where the $j$-th copy has the same orientation of the corresponding arc of $\tau$ if $\epsilon_j=1$ and  the opposite one otherwise (see Figure \ref{fig_ntangle}). To each  crossing of $D_{\tau}$  (on the left of the figure), there correspond $N^2$ crossings in $D_{\vec{\epsilon}\tau}$ (on the right of the figure). To each of the three arcs, say $x_i,x_l,x_h$, involved in a crossing of $D_{\tau}$, there correspond  $N$ arcs in $D_{\vec{\epsilon}\tau}$: we label them  by $x_{ij}$, $x_{lj}$ and $x_{hj}$ for $j=1,\ldots,N$ as denoted in the figure. Moreover,  $D_{\vec{\epsilon}\tau}$ also contains $N^2-N$ new arcs that are labeled by $y_{kb}$ for $k=1,\ldots,N$ and $b=1,\ldots, N-1$. Using the rule \eqref{qrelation}, all those $N^2-N$ new generators could be expressed in terms of the $x_{ij}$'s via relations corresponding to the crossings in which they appear, namely: $y_{j1}=x_{hj}\vartriangleright^{\epsilon_1} x_{l1}$, $y_{jb}=y_{j(b-1)}\vartriangleright^{\epsilon_b} x_{lb}$ for $b=2,\ldots,N-1$ and $x_{ij}=y_{j(N-1)}\vartriangleright^{\epsilon_N} x_{lN}$ for $j=1,\ldots, N$. So the only remaining generators are the $x_{ij}$'s and the relations are $x_{ij}= \left(\cdots\left(\left( x_{hj}\rhd^{\epsilon_1} x_{l1}\right) \vartriangleright^{\epsilon_2} x_{l2}\right)\vartriangleright^{\epsilon_{3}}\cdots \vartriangleright^{\epsilon_{N-1}} x_{l(N-1)}\right) \vartriangleright^{\epsilon_N} x_{lN}$.  As a  result, we obtain the presentation of $ Q(\vec{\epsilon}\tau)$ stated above. 
\end{proof}

Using Proposition \ref{prop1} together with Corollary \ref{cor2}, a presentation for the fundamental quandle of a satellite knot $K=\lambda_k^{\eta}(\tau_E\otimes k\tau_C)\overline{\lambda^{\eta}_k}$ may be computed from the fundamental quandles of $\tau_E$ and $\tau_C$. In this case, the vector $\vec{\epsilon}\in\{-1,1\}^k$ has to be chosen so that $\vec{\epsilon}\tau_C$ is a $(\psi,\psi)$-tangle. We illustrate our results by two explicit calculations.

\begin{figure}[h!]
\labellist
\pinlabel $\tau _E$ at -10 830
\pinlabel $f(\tau _C)$ at 540 830
\pinlabel $(1,1)\tau _C$ at 2040 830
\pinlabel $\lambda _2$ at 900 -45
\pinlabel $\overline{\lambda }_2$ at 900 1650
\pinlabel $y_4$ at 110 1100
\pinlabel $y_5$ at 405 1100
\pinlabel $y_3$ at 405 900
\pinlabel $y_1$ at 90 500
\pinlabel $y_2$ at 405 500
\pinlabel $x_{12}$ at 1780 380
\pinlabel $x_{11}$ at 2100 380
\pinlabel $x_{22}$ at 1150 870
\pinlabel $x_{21}$ at 1415 1000
\pinlabel $x_{32}$ at 1515 540
\pinlabel $x_{31}$ at 1255 540
\pinlabel $x_{42}$ at 1490 1100
\pinlabel $x_{41}$ at 1740 1230
\pinlabel $x_{52}$ at 1290 1300 
\pinlabel $x_{51}$ at 1515 1470
\pinlabel $\otimes $ at 900 830
\normalsize \hair 2pt
\endlabellist
\begin{center}
\includegraphics[scale=0.15]{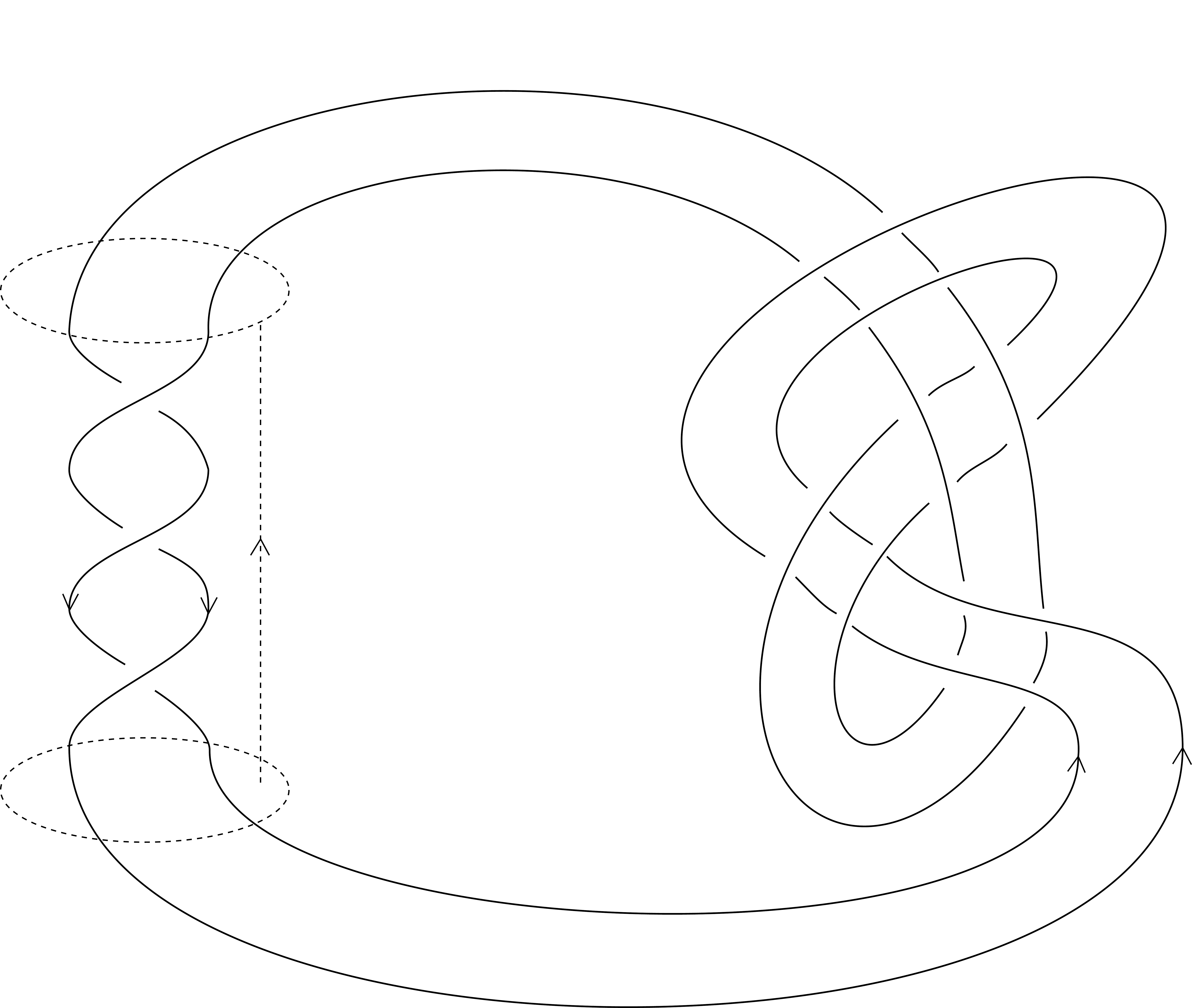}
\vspace{5mm}
\caption{The $(3,2)$-cable of the figure-eight knot and its tangle decomposition.}
\label{fig_cable1}
\end{center}
\end{figure}

\begin{example}[$(3,2)$-cable of the figure-eight knot] We decompose this satellite knot $K$ into tangles as $K=\lambda _2 (\tau _E \otimes (1,1)\tau _ C)\overline{\lambda}_2$, with orientation indicated in Figure \ref{fig_cable1}. The fundamental quandles of tangles $\tau _E$ and $(1,1)\tau _C$ admit presentations 
\begin{align*}
 Q(\tau _E)&=\langle y_1 ,y_2 ,y_3 ,y_4 ,y_5 \, |\, y_3 \tr y_1 =y_2, y_5 \tr y_3 =y_1 , y_4 \tr y_5 =y_3 \rangle, \\
 Q((1,1)\tau _C)&=\langle \{x_{ij},\  i=1,2,3,4,5,\, j=1,2\}\mid 
 \{(x_{4j}\tr x_{11})\tr x_{12}=x_{3j}, \quad (x_{2j}\tr x_{31})\tr x_{32}=x_{1j}, \\
& \qquad (x_{4j}\tr x_{21})\tr x_{22}=x_{5j}, \quad (x_{2j}\tr x_{41})\tr x_{42}=x_{3j},\  j=1,2\}\rangle. 
\end{align*} 

It follows that the fundamental quandle of the satellite $K$ is given by the presentation
\begin{align*}
 Q(K)=\langle &\{y_i , x_{ij},\  i=1,\ldots ,5 , j=1,2\}\mid
 \{y_1 =x_{11}, y_2 =x_{12}, y_4 = x_{51},  y_5 = x_{52}, \\ &\ y_3 \tr y_1 =y_2, y_5 \tr y_3 =y_1 , y_4 \tr y_5 =y_3 , (x_{4j}\tr x_{11})\tr x_{12}=x_{3j},\\ &\  (x_{2j}\tr x_{31})\tr x_{32}=x_{1j}, (x_{4j}\tr x_{21})\tr x_{22}=x_{5j}, \\ &\ (x_{2j}\tr x_{41})\tr x_{42}=x_{3j},\  j=1,2\}\rangle, 
\end{align*} that reduces to 
\begin{align*}
 Q(K)=\langle& y_1 ,y_2 ,x_{21},x_{22},x_{41},x_{42}\, |\, (x_{21}\tr ((x_{41}\tr y_{1})\tr y_{2}))\tr ((x_{42}\tr y_{1})\tr y_{2})=y_{1},\\
&  (x_{22}\tr ((x_{41}\tr y_{1})\tr y_{2}))\tr ((x_{42}\tr y_{1})\tr y_{2})=y_{2},\\ & (x_{41}\tr x_{21})\tr x_{22}=(y_2 \tl y_1 )\tl (y_{1}\tl (y_{2}\tl y_{1})),\\
& (x_{42}\tr x_{21})\tr x_{22}=y_{1}\tl (y_{2}\tl y_{1}), (x_{41}\tr y_{1})\tr y_{2}=(x_{21}\tr x_{41})\tr x_{42}, \\
& (x_{42}\tr y_{1})\tr y_{2}=(x_{22}\tr x_{41})\tr x_{42}\rangle.
\end{align*}

\end{example}

\begin{figure}[h!]
\labellist
\pinlabel $\tau _E$ at -10 900
\pinlabel $f(\tau _C)$ at 570 900
\pinlabel $(1,1)\tau _C$ at 2635 900
\pinlabel $\lambda _2$ at 1000 -55
\pinlabel $\overline{\lambda }_2$ at 1000 1820
\pinlabel $y_3$ at 80 1160
\pinlabel $y_4$ at 410 1160
\pinlabel $y_1$ at 80 600
\pinlabel $y_2$ at 400 600
\pinlabel $x_{12}$ at 1710 390
\pinlabel $x_{11}$ at 2090 350
\pinlabel $x_{22}$ at 2290 1310
\pinlabel $x_{21}$ at 2610 1300
\pinlabel $x_{32}$ at 1700 840
\pinlabel $x_{31}$ at 1360 840
\pinlabel $x_{42}$ at 1810 950
\pinlabel $x_{41}$ at 2010 1130
\pinlabel $\otimes $ at 1000 900
\normalsize \hair 2pt
\endlabellist
\begin{center}
\includegraphics[scale=0.12]{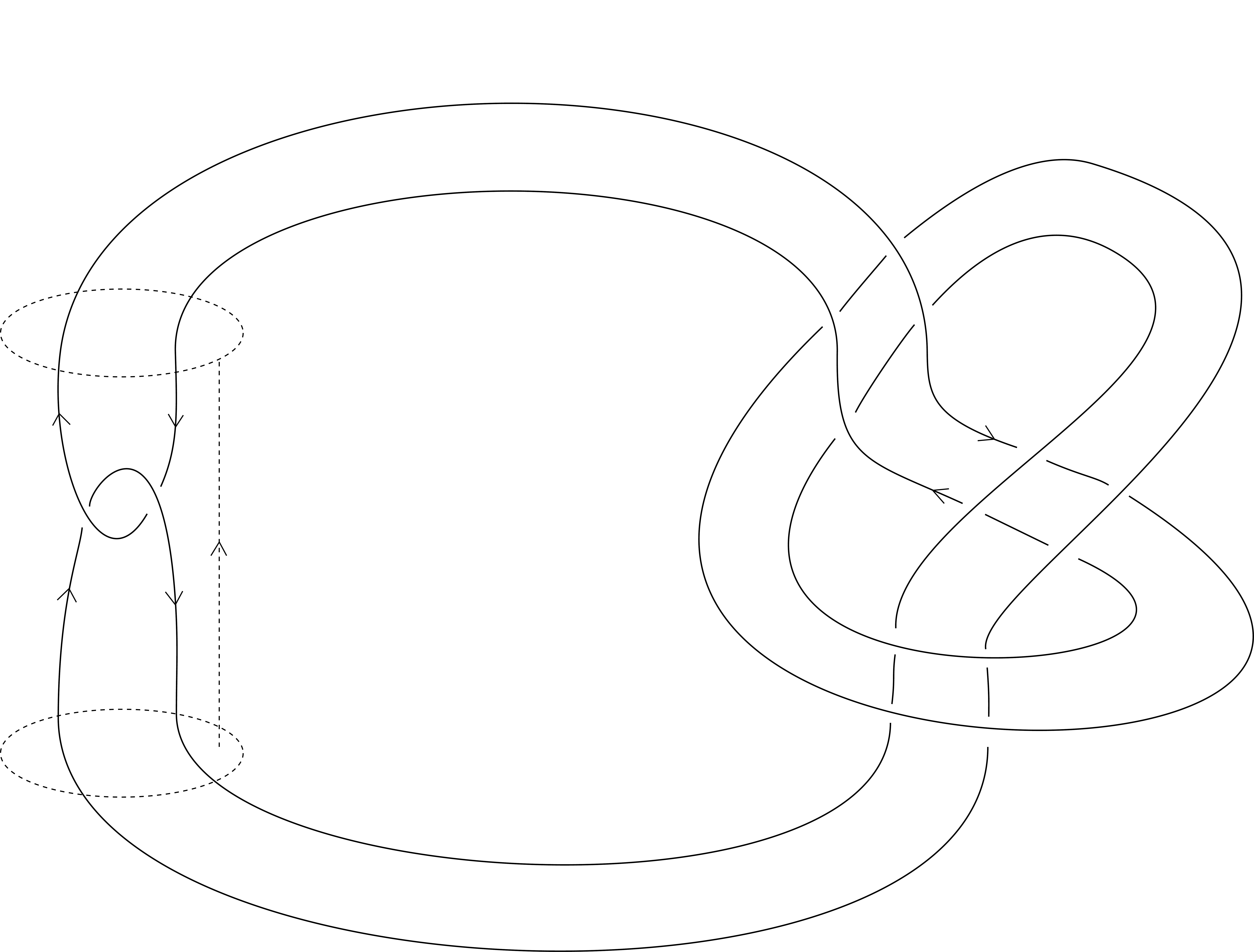}
\vspace{5mm}
\caption{The double of the trefoil knot and its tangle decomposition.}
\label{fig_double}
\end{center}
\end{figure}

\begin{example}[Double of the trefoil knot] The double $D$ of the trefoil knot may be decomposed into tangles as $D=\lambda _2 (\tau _E \otimes (1,-1)\tau _ C)\overline{\lambda }_2$ with orientation indicated in Figure \ref{fig_double}. The fundamental quandles of tangles $\tau _E$ and $(1,-1)\tau _C$ admit presentations 
\begin{align*}
Q(\tau _E)&=\langle y_1 ,y_2 ,y_3 ,y_4 \, |\, y_2 \tr y_3 =y_1, y_3 \tr y_2 =y_4 \rangle, \\
Q((1,-1)\tau _C)&=\langle \{x_{ij},\  i=1,2,3,4,\, j=1,2\}\, |\, \\
& \quad \{(x_{1j}\tl x_{31})\tr x_{32}=x_{2j}, (x_{3j}\tl x_{21})\tr x_{22}=x_{4j}, (x_{2j}\tl x_{41})\tr x_{42}=x_{3j},\ j=1,2\}\rangle. 
\end{align*}

It follows that the fundamental quandle of the satellite $D$ is given by the presentation
\begin{align*}
 Q(D)=\langle& \{y_i , x_{ij}\, |\, i=1,2,3,4 , j=1,2\}\, |\{y_1 =x_{11}, y_2 =x_{12}, y_3 = x_{41}, y_4 = x_{42},\\ & y_2 \tr y_3 =y_1, y_3 \tr y_2 =y_4  (x_{1j}\tl x_{31})\tr x_{32}=x_{2j}, (x_{3j}\tl x_{21})\tr x_{22}=x_{4j},\\ & (x_{2j}\tl x_{41})\tr x_{42}=x_{3j}, j=1,2\}\rangle, 
\end{align*} that reduces to 
\begin{align*}
 Q(D)=\langle &y_2 ,y_3 ,x_{31},x_{32}\mid
 (x_{31}\tl (((y_{2}\tr y_{3})\tl x_{31})\tr x_{32}))\tr ((y_{2}\tl x_{31})\tr x_{32})=y_{3},\\ &  ((((y_{2}\tr y_{3})\tl x_{31})\tr x_{32})\tl y_{3})\tr (y_{3}\tr y_{2})=x_{31},\\
& (x_{32}\tl (((y_{2}\tr y_{3})\tl x_{31})\tr x_{32}))\tr ((y_{2}\tl x_{31})\tr x_{32})=y_{3}\tr y_2 , \\ & (((y_{2}\tl x_{31})\tr x_{32})\tl y_{3})\tr (y_{3}\tr y_{2})=x_{32}\rangle.
\end{align*}
\end{example}

\end{subsection}

\begin{subsection}{Conclusion}
\label{concl}
In this paper, knot quandle decompositions were introduced. We demonstrated how by such decompositions, fundamental quandles of links may be analyzed and their presentations simplified, thereby making the quandle invariants of links more useful. To conclude, we offer some ideas for further research. 
\begin{enumerate}
\setlength{\itemsep}{2pt}
\item By analyzing the fundamental quandle of links using quandle decompositions, is it possible to determine which quandles are the fundamental quandles of links?
\item In Subsection \ref{appl2}, we derived a relationship between the counting invariant of a composite knot and the counting invariants of its factor knots. What can be said about counting invariants (or other quandle invariants) of satellite knots? What about periodic links - is there a relationship between the counting invariant of a periodic link $\widehat{\tau ^{p}}$ and the counting invariant of the tangle $\tau $?   
\item As we mentioned in Remark \ref{rem_fr}, the functor $BQ$ may also be used in the study of braids. To begin with, one could try to understand the representation of the braid group $B_k$ in the automorphism group of the free quandle $Aut(F(P_k))$. Using this representation, new quandle invariants of braids may be defined.
\item The fundamental quandle is a classifying invariant for knots in $S^3$, so in principle every knot invariant in $S^3$ is somehow encoded in the fundamental quandle of the knot (see \cite{FR}). For some invariants, such as the Alexander polynomial, the relationship is well known (see \cite{FR}), while for others it is not. The fundamental quandle functor could be useful for studying those invariants that may be defined using a tangle approach, such as the Jones polynomial or the quantum invariants of links. 
\item Several generalizations of the quandle structure as virtual quandles (see \cite{Man}) or biquandles (see \cite{H,KAUF}), have been introduced to study classical knots and links, virtual ones or knots and links in manifolds different from $S^3$  (see \cite{CN}). Is it possible to extend the fundamental quandle functor in these cases?
\end{enumerate}
\end{subsection}

\end{section}

\end{document}